\newtheorem{proposition}{Proposition}[section]
\newtheorem{lemma}[proposition]{Lemma}
\newtheorem{corollary}[proposition]{Corollary}
\newtheorem{theorem}[proposition]{Theorem}
\theoremstyle{definition}
\newtheorem{definition}[proposition]{Definition}
\newtheorem{example}[proposition]{Example}
\newcommand{\thlabel}[1]{\label{th:#1}}
\newcommand{\thref}[1]{Theorem~\ref{th:#1}}
\newcommand{\selabel}[1]{\label{se:#1}}
\newcommand{\seref}[1]{Section~\ref{se:#1}}
\newcommand{\colabel}[1]{\label{co:#1}}
\newcommand{\delabel}[1]{\label{de:#1}}
\newcommand{\deref}[1]{Definition~\ref{de:#1}}
\newcommand{\eqlabel}[1]{\label{eq:#1}}
\newcommand{\equref}[1]{(\ref{eq:#1})}
\newcommand{\Hom}{{\rm Hom}}
\newcommand{\Cc}{\mathcal{C}}
\def\*C{{}^*\hspace*{-1pt}{\Cc}}
\def\text#1{{\rm {\rm #1}}}
\newcommand{\half}{\textstyle{\frac{1}{2}}}
\newcommand{\third}{\textstyle{\frac{1}{3}}}
\newcommand{\trl}{\triangleleft}
\newcommand{\trr}{\triangleright}
\newcommand{\ppl}{\leftharpoonup}
\newcommand{\ppr}{\rightharpoonup}
\begin{document}
\title[Extending structures for Zinbiel algebras]
{Extending structures  for Zinbiel algebras}

\author[T. Zhang]{Tao Zhang}
\address{College of Mathematics and Information Science\\
Henan Normal University\\
Xinxiang 453007, PR China}
\email{zhangtao@htu.edu.cn}

\author[L. Zhang]{Ling Zhang}
\address{College of Mathematics and Information Science\\
Henan Normal University\\
Xinxiang 453007, PR China}
\email{zhanglingny@163.com}


\subjclass[2020]{17A32, 17D99}

\keywords{Zinbiel algebras, unified products,  extending structures, non-abelian cohomology, complements}


\begin{abstract}
The extending structures and unified products for Zinbiel algebras are developed.
Some special cases of unified products such as crossed products  and matched pair of Zinbiel algebras are studied.
It is proved that the extending structures can be classified by some non-abelian cohomology theory.
One dimensional flag extending structures of Zinbiel algebras are also investigated.
\end{abstract}

\maketitle

\section*{Introduction}
Zinbiel algebra is an important type of non-associative algebras related to Leibniz algebras.
It proved that the cohomology of Leibniz algebras and rack cohomology is a Zinbiel algebra, see  \cite{Lod2, Covez}.
It is also known as Tortkara algebras, pre-commutative and chronological algebras \cite{Kaw,K16}.
The classification of low dimensional Zinbiel algebras was investigated in \cite{AOK,ALO,DT,KPPV,KAM,Ni}.
According to operad theory, Zinbiel algebras are Koszul dual to Leibniz algebras.
Some other algebraic theory of Zinbiel algebras were studied in \cite{MS,N1,N2,S20,Yau}.
It has been appeared some interesting applications of Zinbiel algebras in multiple zeta values and construction of a Cartesian differential category
\cite{C1,IP}. For more recent studies of Zinbiel algebras,  see \cite{CKK,Covez,GGZ,HD}.

On the other hand,  there is an extending problem for any variety of algebra.
Given a certain variety of algebra $A$, $E$ a vector space containing $A$ as a subspace. The extending structures problem asks for the classification of all algebra structures on $E$ such that $A$ is a subalgebra of $E$.
The extending structures for groups, quantum groups, Lie algebras, associative algebras and Leibniz algebras were studied by Agore and Militaru in \cite{AM01,AM02,AM1,AM2,AM3,AM4,AM5, AM6}.
The extending structures for left-symmetric algebras, associative and Lie conformal algebras has also been studied by Y. Hong and Y. Su in  \cite{Hong1,Hong2,Hong3}.
Extending structures for Lie conformal superalgebras, 3-Lie algebras, Lie bialgebras and infinitesimal bialgebras were studied  in \cite{ZCY,Zhang1,Zhang2,Zhang3}.

In this paper, we will study  extending structures and unified products for Zinbiel algebras.
Let ${Z}$ be a Zinbiel algebra and $E$ a vector space containing ${Z}$ as a subspace.
We will describe and classify up to an isomorphism of Zinbiel algebras that stabilizes ${Z}$. We will find  the set of all Zinbiel algebra structures that can be defined on $E$  such that ${Z}$ is a subalgebra of $E$.
It is proved that the extending structures described and classified
by two dimensional non-abelian cohomology theory.

The paper is organized as follows. In \seref{unifiedprod} we
introduce the abstract construction of the \emph{unified product}
${Z} \natural V$ for Zinbiel algebras. It is associated to
a Zinbiel algebra ${Z}$, a vector space $V$ and a system
of data $\Omega({Z}, V)$ called an extending datum of ${Z}$ through
$V$.
In \seref{cazurispeciale} we show some special cases of unified products: crossed products and bicrossed products.
In \seref{exemple} we study flag extending structures of a given Zinbiel algebra ${Z}$
by $V$ in the case when $V$ is a one dimensional vector space.

Throughout this paper, all vector spaces are assumed to be over an algebraically closed field $k$ of characteristic 0.
The space of linear maps from $V$ to $W$ is denoted by $\Hom(V,W)$.
The identity map of a vector space $V$ is denoted by $id_V: V\to V$ or simply $id: V\to V$.

\section{Unified products for Zinbiel algebras}\selabel{unifiedprod}

First we recall some facts and definitions about Zinbiel algebras.

\begin{definition}
A Zinbiel algebra is a vector space ${Z}$ together with a bilinear map $\cdot  : {Z} \times {Z} \to
{Z}$ satisfying the following Zinbiel identity:
\begin{equation}
(x\cdot y)\cdot z=x\cdot(y\cdot z+z\cdot y)
\end{equation}
for all $x,y,z \in {Z}$.
\end{definition}

A homomorphism between two Zinbiel algebras $({Z},\cdot)$ and $({Z}',\cdot')$ is a linear map $\phi:{Z}\to {Z}'$ such that
$$\phi(x\cdot y)=\phi(x)\cdot'\phi(y)$$
for all $x,y\in{Z}.$  A homomorphism  is an isomorphism if it is a bijective map.

Let ${Z}$ be a Zinbiel algebra. A subspace $I\subseteq {Z}$ is called a two-sided ideal of ${Z}$ if
$x\cdot u \in I$ and $u\cdot x \in I$, for all $u\in I$ and $x \in {Z}$. For a  homomorphism $\phi:{Z}\to {Z}'$,  it is easy to see that the kernel space $\operatorname{Ker}(\phi)$ is a two-sided ideal of ${Z}$.

\begin{definition}
Let ${Z}$ be a Zinbiel algebra, $V$ a vector space. A bimodule of ${Z}$ over a vector space $V$ is a pair of linear maps \, $ \trr :{Z}\times V \to V,(x,v) \to x \trr v$ and $\trl :V\times {Z} \to V,(v,x) \to v \trl x$  such that the following conditions hold:
\begin{eqnarray}
  &&(x \cdot y) \trr v = x \trr (y \trr v+v\trl y),\\
  && (v \trl x) \trl y =v\trl(x \cdot y+y \cdot x) ,\\
  &&(x\trr v)\trl y  = x\trr (v \trl y+y \trr v),
\end{eqnarray}
for all $x,y\in {Z}$ and $v\in V.$
\end{definition}

The proof of the following Proposition \ref{prop:01} is  by easy direct computations, so we omit the details.

\begin{proposition}\label{prop:01}
Let $Z$ be a Zinbiel algebra and $(V,\triangleright)$  be a bimodule. Then the direct sum vector space $Z \oplus V$ is a Zinbiel algebra with the bilinear map defined by:
\begin{equation}
(x, u)\cdot (y, v)= (x\cdot y, x \triangleright v + u\triangleleft y)
\end{equation}
for all $ x, y \in Z, u, v \in V$. This is called the semi-direct product of $Z$ and $V$.
\end{proposition}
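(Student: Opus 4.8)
The plan is to verify directly that the bilinear map
$(x,u)\cdot(y,v)=(x\cdot y,\ x\trr v+u\trl y)$
satisfies the Zinbiel identity on $Z\oplus V$, using the Zinbiel identity on $Z$ together with the three bimodule axioms. First I would fix elements $(x,u),(y,v),(z,w)\in Z\oplus V$ and expand both sides of
$((x,u)\cdot(y,v))\cdot(z,w)=(x,u)\cdot\bigl((y,v)\cdot(z,w)+(z,w)\cdot(y,v)\bigr)$.
The first ($Z$-)component of each side reduces exactly to the Zinbiel identity $(x\cdot y)\cdot z=x\cdot(y\cdot z+z\cdot y)$ in $Z$, so that component matches immediately.

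The substantive part is the $V$-component. Expanding the left-hand side gives
$(x\cdot y)\trr w+(x\trr v+u\trl y)\trl z$,
while the right-hand side gives
$x\trr\bigl(y\trr w+z\trr v+v\trl z+w\trl y\bigr)+u\trl(y\cdot z+z\cdot y)$.
I would then match the terms group by group: the term $u\trl(y\cdot z+z\cdot y)$ on the right equals $(u\trl y)\trl z$ on the left by the second bimodule axiom; the term $(x\trr v)\trl z$ on the left equals $x\trr(v\trl z+z\trr v)$ by the third bimodule axiom; and the remaining terms $(x\cdot y)\trr w$ on the left together with $x\trr(y\trr w+w\trl y)$ on the right are identified via the first bimodule axiom. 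Collecting these three identifications shows the $V$-components agree, completing the verification.

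I do not expect a genuine obstacle here — the statement is of the routine "check the axiom" type, and indeed the paper itself notes the proof is by direct computation. The only point requiring mild care is the bookkeeping of the six $V$-valued terms on each side and ensuring each is accounted for by exactly one of the three bimodule axioms (equations for $(x\cdot y)\trr v$, $(v\trl x)\trl y$, and $(x\trr v)\trl y$ respectively); once the terms are sorted by "shape" the matching is forced. Bilinearity of all the maps involved guarantees the construction yields a well-defined bilinear multiplication on $Z\oplus V$, so no further structure need be checked.
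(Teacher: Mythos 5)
Your proposal is correct and is precisely the direct verification the paper has in mind: the paper omits the proof of Proposition~\ref{prop:01}, stating only that it follows by easy direct computation, and your term-by-term matching of the $V$-component against the three bimodule axioms (with the $Z$-component reducing to the Zinbiel identity in $Z$) is exactly that computation, carried out correctly.
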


\begin{definition}
Let ${Z}$ be a given Zinbiel algebra, $E$ a vector space.
An extending structure of ${Z}$ through $V$ is a Zinbiel algebra on $E$
such that ${Z}$ is a subalgebra of $E$ and $V$ a complement of ${Z}$ in $E$,
which fit into the following exact sequence as vector spaces
\begin{eqnarray} \label{extencros0}
\xymatrix{ 0 \ar[r] & {Z} \ar[r]^{i} & {E}
\ar[r]^{\pi} &V \ar[r] & 0 }.
\end{eqnarray}
The extending problem is to describe and classify up to an isomorphism  the set of all Zinbiel algebra structures that can be defined on $E$  such that ${Z}$ is a Zinbiel subalgebra of $E$.
\end{definition}

\begin{definition} \delabel{echivextedn}
Let ${Z}$ be a Zinbiel algebra, $E$ a vector space such that
${Z}$ is a subalgebra of $E$ and $V$ a complement of
${Z}$ in $E$. For a linear map $\varphi: E \to E$ we
consider the diagram:
\begin{eqnarray} \eqlabel{diagrama1}
\xymatrix {& {Z} \ar[r]^{i} \ar[d]_{id} & {E}
\ar[r]^{\pi} \ar[d]^{\varphi} & V \ar[d]^{id}\\
& {Z} \ar[r]^{i} & {E}\ar[r]^{\pi } & V}
\end{eqnarray}
where $\pi : E \to V$ is the canonical projection of $E =
{Z} \oplus V$ on $V$ and $i: {Z} \to E$ is the
inclusion map. We say that $\varphi: E \to E$ \emph{stabilizes}
${Z}$ if the left square of the diagram \equref{diagrama1} is
commutative, and $\varphi: E \to E$ \emph{stabilizes}
$V$ if the right square of the diagram \equref{diagrama1} is
commutative.

Let $(E,\cdot)$ and $(E,\cdot')$ be two Zinbiel algebra structures
on $E$ both containing ${Z}$ as a subalgebra. $(E,\cdot)$ and $(E,\cdot')$ are called \emph{equivalent}, and we
denote this by $(E, \cdot) \equiv (E, \cdot')$, if
there exists a Zinbiel algebra isomorphism $\varphi: (E, \cdot)
\to (E, \cdot')$ which stabilizes ${Z}$. Denote by $Extd(E,{Z})$ the set of equivalent classes of ${Z}$ through $V$.

 $(E,\cdot)$ and $(E,\cdot')$ are called \emph{cohomologous},
and we denote this by $(E,\cdot) \approx (E, \cdot')$, if there exists a Zinbiel algebra isomorphism
$\varphi: (E, \cdot) \to (E,\cdot')$ which stabilizes ${Z}$ and co-stabilizes $V$.
Denote by $Extd'(E,{Z})$  the set of  cohomologous classes of ${Z}$ through $V$.
\end{definition}

In the following of this section we give a theoretical answer to the
extending problem by constructing two cohomological type objects which
will parameterize ${\rm Extd} \, (E, \, {Z})$ and ${\rm
Extd}' \, (E, \, {Z})$.

\begin{definition}\delabel{exdatum}
Let ${Z}$ be a Zinbiel algebra and $V$ a vector space. An
\textit{extending datum of ${Z}$ through $V$} is a system
$\Omega({Z}, V) = \bigl(\trl, \, \trr,
\, \ppl, \, \ppr, \, \omega, \, \ast  \bigl)$
consisting of six bilinear maps:
\begin{eqnarray*}
\trl : V \times {Z} \to V, \quad
&& \trr\, : {Z} \times V \to V,\quad \ppr: V \times {Z} \to {Z}, \quad\ppl \, : {Z} \times V \to {Z}, \\
&& \omega: V\times V \to {Z},\quad \ast: V\times V\to V.
\end{eqnarray*}
Let $\Omega({Z}, V) = \bigl(\trl, \,
\trr, \, \ppl, \, \ppr, \, \omega, \, \ast  \bigl)$ be an extending datum. We denote by ${Z}
\, \natural V$ the direct sum vector space ${Z}\oplus V$ together
with the the bilinear map defined by:
\begin{equation}\eqlabel{brackunif}
(x, u) \circ (y, v) := \big( x\cdot y + x \ppl v + u\ppr y + \omega(u, v),\  x \trr v + u\trl y + u \ast v \big)
\end{equation}
for all $ x,y \in {Z}$, $u,v \in V$.
The object ${Z} \natural V$ is called the \textit{unified product} of ${Z}$ and $V$ if it is a Zinbiel algebra with the the bilinear map  given by \equref{brackunif}.
\end{definition}

The following theorem provides the set of axioms that need to be
fulfilled by an extending datum $\Omega({Z}, V)$ such
that ${Z} \natural V$ is a unified product.

\begin{theorem}\thlabel{1}
Let ${Z}$ be a Zinbiel algebra, $V$ a vector space and
$\Omega({Z}, V)$ an extending datum of ${Z}$ by
$V$. Then ${Z} \natural V$ is a unified product if and
only if the following compatibility conditions hold for all $u$,
$v, w\in V$, $x$, $y$, $z \in {Z}$:
\begin{enumerate}
\item[(Z1)] $(V, \trl, \trr)$ is a $Z$-bimodule:
            \begin{eqnarray*}
            &&( x\cdot y )\trr w=x\trr( y\trr w+w\trl y),\\
            &&(x\trr v)\trl z=x\trr(v\trl z+ z\trr v),\\
            &&(u\trl y)\trl z=u\trl (y\cdot z+ z\cdot y),
            \end{eqnarray*}
\item[(Z2)]
\begin{eqnarray*}
&& (x\ppl v)\cdot y+(x \trr v)\ppr y\\
&=&x\cdot (v \ppr y+y \ppl v)+x\ppl( v\trl y + y \trr v),
\end{eqnarray*}
\item[(Z3)] $$(u \ppr x)\cdot y +( u\trl x )\ppr y = u\ppr( x \cdot  y + y\cdot  x ),$$
\item[(Z4)]
\begin{eqnarray*}
&&\omega(u, v)\cdot x+ (u \ast v )\ppr x\\
&=& u\ppr( v\ppr x + x \ppl v)+ \omega(u,v\trl x+ x \trr v),
\end{eqnarray*}
\item[(Z5)] \begin{eqnarray*}
&&(u \ast v)\trl x = u\trl( v\ppr x+x\ppl v)+u \ast( v\trl x + x \trr v ),
\end{eqnarray*}
\item[(Z6)]\begin{eqnarray*}
   &&( x\cdot y )\ppl w  =x\cdot(y \ppl w+w\ppr y)+x\ppl(y \trr w+w\trl y),
   \end{eqnarray*}
\item[(Z7)]
\begin{eqnarray*}
&&( x \ppl v)\ppl w+\omega(x \trr v,w)\\
&=&x\cdot\big(\omega(v, w)+\omega(w,v)\big)+x\ppl(v \ast w +w \ast v ),
\end{eqnarray*}
\item[(Z8)] $$(x \ppl v) \trr w+(x\trr v)\ast w = x\trr( v \ast w+w \ast v ),$$
\item[(Z9)]
\begin{eqnarray*}
&&( u\ppr x)\ppl w+\omega( u\trl x,w)\\
&=& u\ppr( x \ppl w+w\ppr x )+\omega(u, w\trl x + x\trr w),
\end{eqnarray*}
\item[(Z10)]
\begin{eqnarray*}
&&(u\ppr x) \trr w+ (u\trl x)\ast w \\
&=& u\trl(x \ppl w + w\ppr x )+u \ast(x \trr w + w\trl x),
\end{eqnarray*}
\item[(Z11)]
\begin{eqnarray*}
&&\omega(u, v)\ppl w+\omega( u \ast v ,w )\\
&=&u\ppr\big(\omega(v, w)+\omega(w,v)\big)+ \omega(u,v \ast w+ w \ast v ),
\end{eqnarray*}
\item[(Z12)]
\begin{eqnarray*}
&&\omega(u, v) \trr w+(u \ast v)\ast w \\
&=& u\trl \big(\omega(v, w)+\omega(w,v)\big)+u \ast(v \ast w+w \ast v).
\end{eqnarray*}
\end{enumerate}
\end{theorem}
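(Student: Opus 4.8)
The plan is to check directly for which extending data the bilinear map $\circ$ of \equref{brackunif} makes $Z\oplus V$ into a Zinbiel algebra, i.e.\ when
$$(a\circ b)\circ c = a\circ (b\circ c + c\circ b)$$
holds for all $a,b,c\in Z\natural V$. Since $\circ$ is bilinear and $Z\natural V=Z\oplus V$ as a vector space, the expression $(a\circ b)\circ c - a\circ (b\circ c + c\circ b)$ is trilinear in $(a,b,c)$, so it vanishes identically iff it vanishes whenever each of $a,b,c$ is of the form $(x,0)$ with $x\in Z$ or $(0,u)$ with $u\in V$. This reduces the verification to the eight cases indexed by which of the three slots is filled from $V$.

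First I would dispose of the case $a=(x,0)$, $b=(y,0)$, $c=(z,0)$, for which the identity is exactly the Zinbiel identity of $Z$ and hence automatic. In each of the remaining seven cases one substitutes \equref{brackunif}, expands both sides, and compares the $Z$-component and the $V$-component separately; each comparison produces one of the compatibility conditions. Carrying this out gives the correspondence: the case $\big((x,0),(y,0),(0,w)\big)$ yields the first identity of (Z1) on the $V$-component and (Z6) on the $Z$-component; $\big((x,0),(0,v),(z,0)\big)$ yields the second identity of (Z1) and (Z2); $\big((0,u),(y,0),(z,0)\big)$ yields the third identity of (Z1) and (Z3); $\big((x,0),(0,v),(0,w)\big)$ yields (Z8) and (Z7); $\big((0,u),(y,0),(0,w)\big)$ yields (Z10) and (Z9); $\big((0,u),(0,v),(z,0)\big)$ yields (Z5) and (Z4); and $\big((0,u),(0,v),(0,w)\big)$ yields (Z12) and (Z11). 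Hence $Z\natural V$ is a unified product precisely when (Z1)--(Z12) all hold, which together with the trilinearity reduction proves both implications.

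The argument is purely computational, so the only genuine obstacle is organizational: in the cases with one or two slots from $V$ the triple substitution of \equref{brackunif} produces a sizeable number of terms, and one must keep careful track of which pieces land in $Z$ (the $x\cdot y$, $\ppl$, $\ppr$ and $\omega$ contributions) and which land in $V$ (the $\trr$, $\trl$ and $\ast$ contributions), especially since $\omega$ and $\ast$ are the terms that couple the two components. I would present one representative case in full, say $\big((0,u),(0,v),(0,w)\big)$, to exhibit how (Z11) and (Z12) arise from matching the $Z$- and $V$-components, and remark that the remaining six cases are entirely analogous, so the routine details are omitted.
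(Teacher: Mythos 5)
Your proposal is correct and takes essentially the same approach as the paper: a direct verification of the Zinbiel identity for the product \equref{brackunif}, with the paper expanding the identity for general mixed elements $(x+u),(y+v),(z+w)$ and comparing term by term, which amounts to the same computation as your case-by-case reduction via trilinearity. Your assignment of the eight homogeneous cases to the conditions (Z1)--(Z12) checks out exactly.
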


\begin{proof} The proof is by direct computation of the Zinbiel identity for the the bilinear map given by \equref{brackunif}.
We check the following condition:
\begin{equation}\label{bigproduct}
\Big((x + u) \circ (y + v)\Big) \circ (z+w)=(x + u) \circ \Big( (y + v)\circ (z+w)\Big)\\
+(x + u) \circ \Big((z+w)\circ (y + v) \Big).
\end{equation}
The left hand side  is equal to
\begin{eqnarray*}
&&\Big((x + u) \circ (y + v)\Big) \circ (z+w)\\
&=&\Big[\Big( x\cdot y + x \ppl v + u\ppr y + \omega(u, v)\Big)\\
&&+\Big(x \trr v + u\trl y + u \ast v \Big)\Big] \circ (z+w)\\
&=&\Big( x\cdot y + x \ppl v + u\ppr y + \omega(u, v)\Big)\cdot z\\
&&+\Big( x\cdot y + x \ppl v + u\ppr y + \omega(u, v)\Big)\ppl w\\
&&+\Big(x \trr v + u\trl y + u \ast v \Big)\ppr z +\omega\Big((x \trr v + u\trl y + u \ast v ),w \Big)\\
&&+\Big[( x\cdot y + x \ppl v + u\ppr y + \omega(u, v)\Big] \trr w\\
&&+\Big(x \trr v + u\trl y + u \ast v \Big)\trl z+\Big(x \trr v + u\trl y + u \ast v \Big)\ast w\\
&=& (x\cdot y)\cdot z+(x\ppl v)\cdot z + (u \ppr y)\cdot z + (\omega(u, v))\cdot z+( x\cdot y )\ppl w \\
&&+( x \ppl v)\ppl w+( u\ppr y)\ppl w+ \omega(u, v)\ppl w+(x \trr v)\ppr z \\
&&+( u\trl y )\ppr z+ (u \ast v )\ppr z+\omega(x \trr v,w) +\omega( u\trl y,w) \\
&&+\omega( u \ast v ,w )+(x\cdot y)\trr w + (x \ppl v) \trr w+ (u\ppr y) \trr w \\
&&+ \omega(u, v) \trr w +(x \trr v)\trl z+(u\trl y )\trl z+(u \ast v)\trl z  \\
&&+(x \trr v )\ast w+(u\trl y)\ast w + (u \ast v )\ast w,
\end{eqnarray*}
and right hand side  is equal to
\begin{eqnarray*}
&& (x + u) \circ\Big((y + v) \circ (z+w)\Big)\\
&=& (x + u) \circ\Big[\Big( y\cdot z + y \ppl w + v\ppr z + \omega(v, w)\Big)\\
&&+\Big(y \trr w + v\trl z + v \ast w \Big)\Big]\\
&=&\Big(x\cdot ( y\cdot z + y \ppl w + v\ppr z + \omega(v, w)\Big)+x\ppl(y \trr w + v\trl z + v \ast w )\\
&&+u\ppr\Big( y\cdot z + y \ppl w + v\ppr z + \omega(v, w)\Big)+\omega(u, (y \trr w + v\trl z \\
&&+ v \ast w) \Big)+\Big(x\trr(y \trr w + v\trl z + v \ast w)\Big)+u\trl( y\cdot z + y \ppl w \\
&& + v\ppr z + \omega(v, w)\Big)+u \ast(y \trr w + v\trl z + v \ast w)\\
&=& x\cdot (y\cdot z)+x\cdot (y \ppl w)+x\cdot (v \ppr z) + x\omega(v, w)+x\ppl(y \trr w) \\
&&+x\ppl( v\trl z )+ x\ppl(v \ast w )+u\ppr( y\cdot z) + u\ppr( y \ppl w) \\
&&+ u\ppr( v\ppr z) + u\ppr\omega(v, w)+\omega(u,y \trr w) \\
&&+ \omega(u,v\trl z) + \omega(u,v \ast w) + x\trr(y \trr w)\\
&&+x\trr(v\trl z) +x\trr( v \ast w)+u\trl( y\cdot z) + u\trl(y \ppl w ) \\
&&+u\trl( v\ppr z)+u\trl \omega(v, w)+u \ast(y \trr w)\\
&&+u \ast( v\trl z) + u \ast(v \ast w),
\end{eqnarray*}
\begin{eqnarray*}
&& (x + u) \circ \Big((z+w)\circ (y + v) \Big)\\
&=&(x + u) \circ\Big[\Big( z\cdot y + z \ppl v + w\ppr y + \omega(w,v)\Big)\\
&&+\Big(z \trr v + w\trl y + w \ast v \Big)\Big]\\
&=&\Big(x\Big(z\cdot y + z \ppl v + w\ppr y + \omega(w,v)\Big)+x\ppl(z \trr v + w\trl y + w \ast v )\\
&&+u\ppr\Big(z\cdot y + z \ppl v + w\ppr y + \omega(w,v)\Big)+\omega\Big(u, (z \trr v + w\trl y \\
&&+ w \ast v )\Big)+\Big(x\trr(z \trr v + w\trl y + w \ast v )+u\trl\Big(z\cdot y + z \ppl v \\
&&+ w\ppr y + \omega(w,v)\Big) +u \ast(z \trr v + w\trl y + w \ast v) \Big)\\
&=&x\cdot (z\cdot y)+x\cdot ( z \ppl v) +x\cdot ( w\ppr y) + x\omega(w,v)\\
&&+x\ppl(z \trr v )+x\ppl( w\trl y )+ x \ppl (w \ast v )\\
&&+u \ppr(z\cdot y )+u\ppr( z \ppl v) + u \ppr(w\ppr y )\\
&&+u \ppr\Big( \omega(w,v)\Big)+\omega(u, z \trr v) +\omega(u, w\trl y) +\omega(u, w \ast v )\\
&&+x\trr(z \trr v )+x\trr(w\trl y)+x\trr(w \ast v )+u\trl(z\cdot y)+u\trl(z\ppl v)\\
&&+u\trl(w\ppr y)+u\trl\omega(w,v)+u \ast(z \trr v )\\
&&+u \ast(w\trl y)+u \ast (w \ast v).
\end{eqnarray*}
If we compare both the two sides item by item, one will find the two sides are equal to each other  if and only if $(Z1)-(Z12)$ hold. The proof is completed.
\end{proof}

For this moment, an extending structure of a  Zinbiel algebra ${Z}$
through $V$ will be viewed as a system $\Omega({Z}, V) =
\bigl(\trl, \, \trr, \, \ppl, \,
\ppr, \, \omega,\, \ast  \bigl)$ satisfying the
compatibility conditions (Z1)--(Z12). We denote by $
{\mathcal Z} ({Z}, V)$ the set of all extending
structures of ${Z}$ through $V$. Given an
extending structure $\Omega({Z},V)$, then
${Z}$ can be seen as a Zinbiel subalgebra of ${Z}\natural V$.
On the contrary, we will prove that any Zinbiel algebra structure
on a vector space $E$ containing ${Z}$ as a
subalgebra is isomorphic to such a unified product.

\begin{theorem}\label{classif}
Let $({Z},\cdot)$ be a Zinbiel algebra, $(E,\circ)$ be a Zinbiel algebra
containing ${Z}$ as a subalgebra in $E$. Then there exists an
extending datum $\Omega({Z}, V) =\bigl(\trl, \, \trr, \, \ppl, \,
\ppr, \, \omega, \, \ast \bigl)$ of ${Z}$
through a subspace $V$ of $E$ and an isomorphism of Zinbiel
algebras $E \cong {Z} \natural V$ that
stabilizes ${Z}$ and co-stabilizes $V$.
\end{theorem}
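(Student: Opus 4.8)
The plan is to fix a linear complement $V$ of $Z$ in $E$, so that $E = Z \oplus V$ as vector spaces, and to denote by $p : E \to Z$ and $\pi : E \to V$ the associated projections. Using the given Zinbiel product $\circ$ on $E$, I would build an extending datum $\Omega(Z,V)$ by restricting $\circ$ and projecting onto the two summands: for $x, y \in Z$ and $u, v \in V$ set
\begin{eqnarray*}
x \trr v := \pi(x \circ v), &\quad& v \trl x := \pi(v \circ x), \\
x \ppl v := p(x \circ v), &\quad& v \ppr x := p(v \circ x), \\
\omega(u,v) := p(u \circ v), &\quad& u \ast v := \pi(u \circ v).
\end{eqnarray*}
These six maps are manifestly bilinear, so $\Omega(Z,V) = (\trl, \trr, \ppl, \ppr, \omega, \ast)$ is an extending datum of $Z$ through $V$.

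Next I would consider the canonical linear map $\varphi : Z \natural V \to E$, $\varphi(x,v) := x + v$, which is a bijection because $E = Z \oplus V$. The core of the argument is to show $\varphi$ is multiplicative. Since $Z$ is a subalgebra of $(E,\circ)$, for $x, y \in Z$ we have $x \circ y \in Z$, hence $p(x\circ y) = x\cdot y$ and $\pi(x\circ y) = 0$, where $\cdot$ is the restricted Zinbiel product on $Z$. Writing $z = p(z) + \pi(z)$ for every $z \in E$ and using bilinearity of $\circ$, one expands $(x+u)\circ(y+v)$ inside $E$, separates its $Z$-component from its $V$-component, and reads off exactly
\[
(x+u)\circ(y+v) = \big(x\cdot y + x\ppl v + u\ppr y + \omega(u,v)\big) + \big(x\trr v + u\trl y + u\ast v\big),
\]
which is precisely $\varphi\big((x,u)\circ(y,v)\big)$ for the bracket \equref{brackunif}. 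Thus $\varphi$ transports the Zinbiel structure of $E$ onto $Z\natural V$; in particular $Z\natural V$ with the bracket \equref{brackunif} is a Zinbiel algebra, so by \thref{1} the datum $\Omega(Z,V)$ satisfies (Z1)--(Z12) and is an extending structure, and $\varphi$ is an isomorphism of Zinbiel algebras.

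Finally I would verify the two normalization properties. For $x \in Z$ we have $\varphi(i(x)) = \varphi(x,0) = x = i(x)$, so $\varphi$ stabilizes $Z$; and for $(x,v) \in Z\natural V$ we have $\pi(\varphi(x,v)) = \pi(x+v) = v$, which is the canonical projection of $Z\natural V = Z\oplus V$ onto $V$, so $\varphi$ co-stabilizes $V$. This yields the desired isomorphism $E \cong Z\natural V$ stabilizing $Z$ and co-stabilizing $V$.

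The only delicate point is the bookkeeping in the middle step: one must match every term arising from expanding and projecting $(x+u)\circ(y+v)$ with the corresponding term of \equref{brackunif}, and observe that the hypothesis ``$Z$ is a subalgebra'' is exactly what forces the $V$-component of $x\circ y$ to vanish. Beyond this routine check there is no genuine obstacle; the algebraic closedness and characteristic-zero hypotheses are not needed here.
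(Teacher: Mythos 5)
Your proposal is correct and follows essentially the same route as the paper: choose a complement $V$ of $Z$ (the paper takes $V=\Ker(p)$ for the projection $p$ with $p|_{Z}=\mathrm{id}$), define the six structure maps by projecting $x\circ v$, $u\circ x$, $u\circ v$ onto $Z$ and $V$, transport the product through $\varphi(x,u)=x+u$, and invoke \thref{1} to conclude that the datum satisfies (Z1)--(Z12). Your version is, if anything, slightly cleaner in that the assignment of each projected product to the correct structure map is stated consistently with \deref{exdatum}, and you make explicit the role of the subalgebra hypothesis in forcing $\pi(x\circ y)=0$.
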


\begin{proof}
Let $p: E \to {Z}$ be the projection map such that $p(x) = x$, for all $x \in {Z}$.
Then $V := \rm{Ker}$$(p)$ is a complement of ${Z}$ in
$E$. We define the extending datum $\Omega({Z}, V)$ of ${Z}$ through $V$ by the following
formulas:
\begin{eqnarray*}
&& \trr : V \times {Z} \to {Z}, \quad
\,\,\, u \trr x= p \bigl(u\circ x\bigl), \\
&& \trl: V \times {Z} \to V,
\quad \,\, u \trl x = u\circ x - p \bigl(u\circ x\bigl),\\
&& \ppl \, : {Z} \times V \to {Z},
\quad x \ppl u = p\bigl(x\circ u\bigl), \\
&&\ppr \, : {Z} \times V \to V,
\quad x\ppr u = x\circ u - p\bigl(x\circ u\bigl),\\
&& \omega: V \times V \to {Z}, \quad \omega(u, v) = p \bigl(u\circ v\bigl),\\
&& \ast: V \times V \to V, \quad u\ast v =u\circ v- p \bigl(u\circ v\bigl).
\end{eqnarray*}
for all $x \in {Z}$, $u$, $v\in V$. Now, the map
$\varphi: {Z} \times V \to E$, $\varphi(x, u) := x + u$, is
a linear isomorphism between the direct product of vector spaces
${Z} \times V$ and the Zinbiel algebra $(E, \circ)$ with
the inverse given by $\varphi^{-1}(e) := \bigl(p(e), \, e -
p(e)\bigl)$, for all $e \in E$. Then there exists a unique
Zinbiel algebra structure on ${Z} \oplus V$ such that
$\varphi$ is an isomorphism of Zinbiel algebras and this unique
the bilinear map is given by:
$$
(x, u)\circ (y, v) := \varphi^{-1} \bigl(\varphi(x, u)\circ\varphi(y, v)\bigl).
$$
for all $x$, $y \in {Z}$ and $u$, $v\in V$. Using
\thref{1}, the proof will be finished if we prove that this
the bilinear map is the one defined by \equref{brackunif} associated to the
system $\bigl(\trl , \, \trr , \,
\ppl , \, \ppr , \, \omega , \ast
\bigl)$ constructed above. Indeed, for all $x$, $y \in
{Z}$, $u$, $v\in V$ we have:
\begin{eqnarray*}
&&(x, u)\circ (y, v) \\
&=& \varphi^{-1} \bigl(\varphi(x, u)\circ\varphi(y, v)\bigl) \\
&=& \varphi^{-1} \bigl(( x+u)\circ( y + v) \bigl) \\
&=& \varphi^{-1} \bigl( x\circ y +  x\circ v+ u\circ y+ u\circ v \bigl) \\
&=& \bigl( p ( x\circ y) +  p ( x\circ v) + p( u\circ y) + p( u\circ v) ,\\
&&x\circ v+ u\circ y+ u\circ v-  p ( x\circ v) - p( u\circ y) - p( u\circ v)\bigl) \\
&=& \bigl(  x\cdot y + x \ppl v + u\ppr y + \omega(u, v)
,x \trr v + u\trl y + u \ast v \bigl)
\end{eqnarray*}
as needed. Thus, $\varphi: {Z} \natural V \to E$ is an
isomorphism of Zinbiel algebras. Moreover, the following diagram is
commutative
\begin{eqnarray*}
\xymatrix {& {Z} \ar[r]^{i} \ar[d]_{id} & {{Z}
\natural V} \ar[r]^{q} \ar[d]^{\varphi} & V \ar[d]^{id}\\
& {Z} \ar[r]^{i} & {E}\ar[r]^{\pi } & V}
\end{eqnarray*}
where $\pi : E \to V$ is a natural projection of $E = {Z} + V$ on the
vector space $V$ and $q: {Z} \natural V \to V$, $q (x, u) := u$ is
the canonical projection. The proof is finished.
\end{proof}

Therefore, by the above Theorem \ref{classif}, for classifying of all Zinbiel
algebra structures on $E$ containing ${Z}$ as a Zinbiel
subalgebra, we only need to classify all unified products
${Z} \natural V$ associated to extending datum $\Omega({Z}, V) = \bigl(\trl, \,
\trr, \ppl, \, \ppr, \, \omega,\, \ast \bigl)$, for a given complement $V$ of ${Z}$ in $E$.
Next we construct the cohomological objects which will parameterize the classifying sets ${\rm Extd} \, (E,{Z})$ and respectively ${\rm Extd}' \, (E,{Z})$.

\begin{lemma} \label{lem:11}
Let $\Omega({Z}, V) = \bigl(\trl, \, \trr, \ppl, \, \ppr, \, \omega,\, \ast \bigl)$ and $\Omega'({Z}, V) = \bigl(\trl ', \, \trr ', \ppl ', \, \ppr ', \, \omega',\, \ast' \bigl)$ be two extending structures of ${Z}$ through $V$ and $ {Z} \natural V$, $ {Z} \natural ' V$ the associated unified products. Then there exists a bijection between the set of all homomorphisms of
Zinbiel algebras $\psi: {Z} \natural V \to {Z} \natural ' V$ which stabilizes ${Z}$ and the set of pairs $(r, s)$, where $r: V \to {Z}$, $s: V \to V$ are two
linear maps satisfying the following compatibility conditions for all $x \in {Z}$, $u$, $v \in V$:
\begin{enumerate}
\item[(M1)] $s(x \trr u) = x \trr ' s(u)$,
\item[(M2)] $s(u \trl x) = s(u) \trl ' x$,
\item[(M3)] $u \ppr x + r(u \trl x) = r(u)\cdot x+ s(u) \ppr ' x$,
\item[(M4)] $x \ppl u+ r(x \trr u) = x\cdot r(u) + x \ppl 's(u)$,
\item[(M5)] $s(u \ast v) = r(u) \trr ' s(v)+ s(u) \trl ' r(v) + s(u) \ast' s(v)$,
\item[(M6)] $\omega(u, \, v) + r(u \ast v) = r(u)\cdot r(v) + r(u)\ppl ' s(v) + s(u) \ppr ' r(v) + \omega' (s(u), s(v))$.
\end{enumerate}
The bijection is given as follows. For a pair $(r, s)$, the corresponding homomorphism of Zinbiel algebras $\psi =
\psi_{(r, s)}: {Z} \natural V \to {Z} \natural ' V$  is given by:
$$
\psi(x, u) = (x + r(u), s(u)).
$$
The homomorphism  $\psi = \psi_{(r, s)}$ is an  isomorphism if and only if
$s: V \to V$ is a bijective map and $\psi = \psi_{(r, s)}$ co-stabilizes $V$ if and only if $s= {id}_V$.
\end{lemma}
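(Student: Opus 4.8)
The plan is to reduce the statement to a direct but systematic verification. First I would take an arbitrary linear map $\psi: {Z} \natural V \to {Z} \natural' V$ that stabilizes ${Z}$, and note that stabilizing ${Z}$ forces $\psi(x,0) = (x,0)$ for all $x \in {Z}$. Writing $\psi(0,u) = (r(u), s(u))$ with $r: V \to {Z}$ and $s: V \to V$ linear, linearity of $\psi$ gives $\psi(x,u) = (x+r(u), s(u))$; this sets up the asserted correspondence between maps $\psi$ and pairs $(r,s)$. Conversely, any pair $(r,s)$ of linear maps yields such a $\psi$ which automatically stabilizes ${Z}$, so the parametrization is a bijection of \emph{sets} before imposing the multiplicativity constraint.

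Next I would impose that $\psi$ be a homomorphism, i.e. $\psi\big((x,u)\circ(y,v)\big) = \psi(x,u)\circ'\psi(y,v)$ for all $x,y \in {Z}$, $u,v\in V$. Using \equref{brackunif} for both $\circ$ and $\circ'$, the left-hand side is
\[
\psi\big(x\cdot y + x\ppl v + u\ppr y + \omega(u,v),\ x\trr v + u\trl y + u\ast v\big),
\]
which expands via $\psi(a,w) = (a + r(w), s(w))$, while the right-hand side is $(x+r(u),s(u))\circ'(y+r(v),s(v))$ expanded via the primed operations. I would then compare the two components. The ${Z}$-component yields, after cancelling the terms $x\cdot y$, $x\ppl v$, $u\ppr y$, $\omega(u,v)$ that already match the "main" part of the primed product, exactly the relations obtained by collecting the contributions involving $r$ and $\omega'$: these regroup into (M3), (M4) and (M6). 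The $V$-component, after cancelling $x\trr' s(v)$-type matches, regroups into (M1), (M2) and (M5). (In fact it is cleanest to check multiplicativity on the three "pure" types of inputs — $(x,0)\circ(0,v)$, $(0,u)\circ(y,0)$, $(0,u)\circ(0,v)$ — and on the $({Z},{Z})$ inputs which hold automatically since $\psi|_{Z}=\mathrm{id}$ and ${Z}$ is a subalgebra in both; bilinearity then gives the general case.) This shows $\psi_{(r,s)}$ is a homomorphism iff (M1)--(M6) hold.

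For the last two assertions: $\psi_{(r,s)}$ is bijective iff it is so on the quotient $V$, i.e. iff $s$ is bijective — indeed $\psi_{(r,s)}$ is always the identity on the subspace ${Z}$, so it is an isomorphism of vector spaces precisely when the induced map $s$ on $E/{Z}\cong V$ is, and a bijective homomorphism of Zinbiel algebras is an isomorphism. Finally, $\psi_{(r,s)}$ co-stabilizes $V$ means $q' \circ \psi_{(r,s)} = q$ where $q,q'$ are the canonical projections onto $V$; since $q'\psi_{(r,s)}(x,u) = s(u)$ and $q(x,u)=u$, this is equivalent to $s = {id}_V$.

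The main obstacle is purely bookkeeping: matching the expanded two-sided identity term by term and verifying that the surviving terms regroup \emph{exactly} into (M1)--(M6) with no extra constraints. The risk is a miscount — e.g. forgetting that $r(u)\cdot r(v)$, $r(u)\ppl' s(v)$ and $s(u)\ppr' r(v)$ all land in the ${Z}$-component and must be gathered together in (M6), or mishandling which of $\trr,\trl,\ppr,\ppl$ lands in which component. Restricting first to the pure input types, as indicated above, keeps this manageable and isolates each $(\mathrm{M}i)$ from a single type of product, so no genuine difficulty arises beyond careful enumeration.
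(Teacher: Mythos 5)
Your proposal is correct and follows essentially the same route as the paper: parametrize the maps stabilizing ${Z}$ by pairs $(r,s)$ via $\psi(x,u)=(x+r(u),s(u))$, expand the multiplicativity condition using \equref{brackunif} on both sides, and match the ${Z}$- and $V$-components to obtain (M1)--(M6), with the inverse $\psi^{-1}(y,v)=(y-r(s^{-1}(v)),s^{-1}(v))$ handling the isomorphism claim. Your parenthetical refinement of checking only the pure input types $(x,0)\circ(0,v)$, $(0,u)\circ(y,0)$, $(0,u)\circ(0,v)$ is a tidier bookkeeping of the same computation, not a different argument.
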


\begin{proof}
A linear map $\psi: {Z} \natural V \to {Z}
\natural ' V$ which stabilizes ${Z}$ is uniquely
determined by two linear maps $r: V \to {Z}$, $s: V \to
V$ such that $\psi(x, u) = (x + r(u), s(u))$, for all $x \in {Z}$, and $u \in V$.
Now we will prove that $\psi$ is a homomorphism of Zinbiel
algebras if and only if $(M1)$--$(M6)$
hold. We will check under what conditions the following equation holds
\begin{equation}\eqlabel{Liemap}
\psi \bigl((x, u)\circ (y, v) \bigl) = \psi(x, u)\circ' \psi(y, v).
\end{equation}
By direct computations, the left hand side of the above equation is equal to
\begin{eqnarray*}
&&\psi \bigl((x, u)\circ(y, v)\bigl) \\
 &=& \psi(x\cdot y + x \ppl v + u\ppr y + \omega(u, v),\  x \trr v + u\trl y + u \ast v)\\
 &=&\bigl(x\cdot y + x \ppl v + u\ppr y + \omega(u, v)+ r(x \trr v) +r(u\trl y)+r(u\ast v), \\
 &&\quad s(x \trr v) + s(u\trl y)+s(u\ast v)\bigl),
 \end{eqnarray*}
and the right hand side is equal to
\begin{eqnarray*}
 &&\psi(x, u)\circ' \psi(y, v)= (x+r(u),s(u))\circ (y+r(v), s(v))\\
 &=& \Big((x+r(u)) \cdot  (y+r(v))  + (x+r(u))\ppl' s(v) + s(u) \ppr' (y+r(v))+ \omega'(s(u), s(v)),\\
 && (x+r(u)) \trr ' s(v) + s(u) \trl ' (y+r(v)) +s(u)\ast' s(v)\Big).
 \end{eqnarray*}
Thus we obtain \equref{Liemap} holds if and only $(M1)$--$(M6)$ hold.

Assume that $s: V \to V$ is bijective linear map. Then $\psi$ is
an isomorphism of Zinbiel algebras with the inverse given by $
\psi_{(r, s)}^{-1}(y, v) = \bigl(y - r(s^{-1}(v)),
s^{-1}(v)\bigl)$, for all $y \in {Z}$ and $v \in V$.
Conversely, assume that $\psi$ is an isomorphism. One easily verify that $s$ is a bijection.
The last assertion is trivial. The proof is completed.
\end{proof}

\begin{definition}\delabel{echiaa}
Two extending structures $\Omega({Z}, V) =
\bigl(\trl, \, \trr, \ppl, \, \ppr, \, \omega,\, \ast \bigl)$ and
$\Omega'({Z}, V) = \bigl(\trl ', \,
\trr ', \ppl ', \, \ppr ', \, \omega',\, \ast' \bigl)$ are called \emph{equivalent} if there exists a pair $(r, s)$ of linear maps, where $r: V \to
{Z}$ and $s: V\to V$ satisfying the following conditions:
\begin{eqnarray*}
u \trl x &=& s^{-1} \bigl(s(u) \trl ' x\bigl),\\
x \trr u &=& s^{-1} \bigl(x \trr ' s(u)\bigl),\\
u \ppr x &=& r(u)\cdot x + s(u) \ppr ' x - r \circ s^{-1}
\bigl(s(u) \trl ' x\bigl), \\
x \ppl u &=& x\cdot r(u) + x \ppl ' s(u) - r
\circ s^{-1} \bigl(x \trr ' s(u)\bigl),\\
u \ast v &=& s^{-1} \bigl(r(u) \trr ' s(v) + s(u)
\trl ' r(v) + s(u) \ast' s(v)\bigl),\\
\omega(u,\, v) &=& r(u)\cdot r(v) + r(u) \ppl ' s(v) + s(u)
\ppr ' r(v) + \omega' \bigl(s(u), \, s(v)\bigl) - \\
&& r \circ s^{-1} \bigl(r(u) \trr ' s(v) + s(u)
\trl ' r(v) + s(u) \ast' s(v)\bigl),
\end{eqnarray*}
for all $x \in {Z}$, $u$, $v \in V$. We will denote it by $\Omega({Z}, V) \equiv \Omega'({Z}, V)$.
\end{definition}

Using the  Lemma \ref{lem:11}, we obtain that $\Omega({Z}, V)
\equiv \Omega'({Z}, V)$ if and only if there exists $\psi
: {Z} \natural V \to {Z} \natural' V$ an
isomorphism of Zinbiel algebras that stabilizes ${Z}$,
where ${Z} \natural V$ and ${Z} \natural' V$ are
the corresponding unified products. If $s=id_V$, then we obtain the isomorphism between two unified products that stabilize ${Z}$ and co-stabilize $V$ as follows:

\begin{definition}\delabel{echiaab}
Two extending structures $\Omega({Z}, V) =
\bigl(\trl, \, \trr, \ppl, \,\ppr, \, \omega,\, \ast \bigl)$ and
$\Omega'({Z}, V) = \bigl(\trl ', \,
\trr ', \ppl ', \, \ppr ', \, \omega',\ast' \bigl)$ are called \emph{cohomologous} if $\trl \, = \, \trl '$,
$\ppr \, = \,  \ppr '$ and there exists a
linear map $r: V \to {Z}$  satisfying the following conditions:
\begin{eqnarray*}
u \ppr x &=& r(u)\cdot x + u \ppr ' x - r(u \trl ' x), \\
x \ppl u &=& x\cdot r(u) + x \ppl ' u - r(x \trr ' u),\\
u \ast v &=& r(u) \trr ' v + u \trl ' r(v) + u \ast' v,\\
\omega(u,\, v) &=& r(u)\cdot r(v) + r(u) \ppl ' v + u
\ppr ' r(v) + \omega' (u, \, v)\\ &&-r \bigl(r(u)
\trr ' v + u \trl ' r(v) + u \ast' v\bigl),
\end{eqnarray*}
for all $x \in {Z}$, $u$, $v \in V$. We will denote it by $\Omega({Z}, V) \approx \Omega'({Z}, V)$.
\end{definition}

We denote by ${\mathcal Z} ({Z}, V)$ the set of extending structures $\Omega({Z}, V)$ .
It is easy to see that  $\equiv$  and $\approx$ are equivalence relations on the set   ${\mathcal Z} ({Z}, V)$.
By the above constructions and by Theorem  \ref{classif} and Lemma \ref{lem:11} we obtain the following result.

\begin{theorem}\thlabel{main1}
Let ${Z}$ be a Zinbiel algebra, $E$ a vector space that
contains ${Z}$ as a subspace and $V$ a complement of
${Z}$ in $E$. Then we get:

$(1)$ Denote ${\mathcal
H}{\mathcal E}^{2}_{} \, (V, \, {Z} ) :=
{\mathcal Z}  ({Z}, V)/ \equiv $, then the
map
$$
{\mathcal H}{\mathcal E}^{2}_{} \, (V, \, {Z}) \to {\rm Extd} \, (E, {Z}), \qquad
\overline{(\trl, \trr, \ppl, \,\ppr, \omega,\, \ast)} \mapsto {Z}\natural V
$$
is bijective, where $\overline{(\trl, \trr,
\ppl, \, \ppr, \omega,\, \ast)}$ is the
equivalence class of $(\trl, \trr,
\ppl, \, \ppr, \omega,\, \ast)$ via $\equiv$.

$(2)$ Denote ${\mathcal
H}{\mathcal C}^{2} \, (V, \, {Z} ) := {\mathcal Z} ({Z}, V)/ \approx $, then the map
$$
{\mathcal H}{\mathcal C}^{2} \, (V, \, {Z} ) \to {\rm Extd}' \, (E, {Z}), \qquad
\overline{\overline{(\trl, \trr,\ppl, \, \ppr, \omega,\, \ast)}} \mapsto{Z} \natural V
$$
is bijective, where $\overline{\overline{(\trl,
\trr, \ppl, \, \ppr, \omega,\, \ast)}}$ is the equivalence class of $(\trl,
\trr, \ppl, \, \ppr,  \omega,\, \ast)$ via $\approx$.
\end{theorem}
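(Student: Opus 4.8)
The plan is to combine Theorem~\ref{classif} and Lemma~\ref{lem:11}; together these already carry all the genuine content, and what remains is formal bookkeeping. Fix once and for all the complement $V$ of $Z$ in $E$, so that $E = Z\oplus V$ as vector spaces and every unified product $Z\natural V$ attached to an extending structure $\Omega(Z,V)\in\mathcal{Z}(Z,V)$ may be viewed as a Zinbiel algebra structure on $E$ in which $Z$ is a subalgebra and $V$ a complement. This produces maps $\mathcal{Z}(Z,V)\to{\rm Extd}(E,Z)$ and $\mathcal{Z}(Z,V)\to{\rm Extd}'(E,Z)$, each sending $\Omega(Z,V)$ to the class of $Z\natural V$; the assertion of the theorem is that they descend to bijections on the quotients by $\equiv$ and by $\approx$ respectively.

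For part $(1)$ I would argue in three steps. First, well-definedness: if $\Omega(Z,V)\equiv\Omega'(Z,V)$ in the sense of \deref{echiaa}, then the defining equations of that definition are precisely the relations $(M1)$--$(M6)$ of Lemma~\ref{lem:11} solved for the unprimed operations with $s$ bijective; hence Lemma~\ref{lem:11} furnishes an isomorphism $\psi_{(r,s)}\colon Z\natural V\to Z\natural' V$ of Zinbiel algebras stabilizing $Z$, which is exactly the condition for $Z\natural V\equiv Z\natural' V$ in the sense of \deref{echivextedn}, so the two determine the same class in ${\rm Extd}(E,Z)$. Second, surjectivity: by Theorem~\ref{classif}, applied to the projection $p\colon E\to Z$ with kernel the given $V$, any Zinbiel algebra structure on $E$ containing $Z$ as a subalgebra is isomorphic, through a map stabilizing $Z$, to some $Z\natural V$, so every class is hit. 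Third, injectivity: if $Z\natural V$ and $Z\natural' V$ are equivalent as extending structures, there is a Zinbiel algebra isomorphism $\psi\colon Z\natural V\to Z\natural' V$ stabilizing $Z$; Lemma~\ref{lem:11} writes it as $\psi_{(r,s)}$ with $s$ bijective and $(r,s)$ satisfying $(M1)$--$(M6)$, and rewriting those equations (using that $s$ is invertible) gives exactly the conditions of \deref{echiaa}, so $\Omega(Z,V)\equiv\Omega'(Z,V)$.

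Part $(2)$ is the identical argument with one extra constraint: the connecting isomorphism is required to co-stabilize $V$ as well. By the final clause of Lemma~\ref{lem:11}, an isomorphism $\psi_{(r,s)}$ stabilizing $Z$ co-stabilizes $V$ precisely when $s={\rm id}_V$, and specializing $(M1)$--$(M6)$ to $s={\rm id}_V$ gives exactly the cohomologous conditions of \deref{echiaab}. Hence $\Omega(Z,V)\approx\Omega'(Z,V)$ if and only if $Z\natural V\approx Z\natural' V$; surjectivity onto ${\rm Extd}'(E,Z)$ again follows from Theorem~\ref{classif}, whose isomorphism $\varphi\colon Z\natural V\to E$ both stabilizes $Z$ and co-stabilizes $V$. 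Thus $\mathcal{HC}^2(V,Z)\to{\rm Extd}'(E,Z)$ is a bijection.

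The points I would spell out carefully, and the only ones needing more than a one-line remark, are: that $\equiv$ and $\approx$ are genuinely equivalence relations on $\mathcal{Z}(Z,V)$ --- reflexivity uses the pair $(r,s)=(0,{\rm id}_V)$, symmetry uses the inverse pair $\psi_{(r,s)}^{-1}=\psi_{(-r\circ s^{-1},\,s^{-1})}$, and transitivity uses that the composite of two homomorphisms of the form $\psi_{(r,s)}$ is again of that form, which the bijection of Lemma~\ref{lem:11} supplies for free; and that passing between the ``solved'' form of the identities in \deref{echiaa} and \deref{echiaab} and the symmetric form $(M1)$--$(M6)$ is reversible, for which the invertibility of $s$ (resp. $s={\rm id}_V$) is precisely what is needed. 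No computation beyond that already performed for Theorem~\ref{classif} and Lemma~\ref{lem:11} should be required.
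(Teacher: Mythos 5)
Your proposal is correct and follows exactly the route the paper intends: the paper's own "proof" is the one-line remark that the theorem follows from Theorem~\ref{classif} (surjectivity) and Lemma~\ref{lem:11} (well-definedness and injectivity, with the $s=\mathrm{id}_V$ specialization handling part (2)), which is precisely what you spell out. Your added care about $\equiv$ and $\approx$ being equivalence relations and about choosing the projection $p$ with kernel the given $V$ fills in details the paper leaves implicit, but introduces no new ideas beyond the paper's argument.
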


\section{Special cases of unified products}\selabel{cazurispeciale}

In this section, we show some special cases of unified products and extending structures.

\subsection{Crossed products and extension problem}

Now we give a first special case of the
unified product, namely the crossed product of Zinbiel algebras
which is related to the study of the extension problem.

Let ${Z}$ and ${W}$ be two given Zinbiel
algebras. The extension problem asks for the classification of all
extensions of ${W}$ by ${Z}$, i.e. all
Zinbiel algebras in $E$ that fit into an exact sequence
\begin{eqnarray} \label{extencros0}
\xymatrix{ 0 \ar[r] & {Z} \ar[r]^{i} & {E}
\ar[r]^{\pi} & {W} \ar[r] & 0 }.
\end{eqnarray}
The classification is up to an isomorphism of Zinbiel algebras
that stabilizes ${Z}$ and co-stabilizes ${W}$
and we denote by ${\mathcal E} {\mathcal P} ({W}, \,
{Z})$ the isomorphism classes of all extensions of
${W}$ by ${Z}$ up to this equivalence relation.

\begin{definition}
Let ${Z}$ and ${W}$ be two Zinbiel algebras.
Then crossed system consist of  linear maps
\begin{eqnarray*}
&&\ppr : {W} \times {Z} \to {Z}, \quad \ppl \, : {Z} \times {W} \to {Z}, \quad \omega: {W}\times {W} \to {Z},
\end{eqnarray*}
such that the direct sum space ${Z}\oplus {W}$ forms a Zinbiel algebra under the following  the bilinear map
\begin{equation*}
(x, u) \circ (y, v) := \big( x\cdot y + x \ppl v + u\ppr y + \omega(u, v), u \cdot v\big),
\end{equation*}
for all $ x,y \in {Z}$, $u,v \in {W}$.
This Zinbiel algebra is called crossed product of ${Z}$ and ${W}$ which will be  denoted   by ${Z}\,\#_\omega{W}$.
\end{definition}

\begin{theorem}
Let ${Z}$ and ${W}$  be two Zinbiel algebras.
Then ${Z}\,\#_\omega{W}$  is  a crossed product  if and only if the following conditions hold:
\begin{enumerate}
\item[(CS1)] $(x\ppl v)\cdot y=x\cdot (v \ppr y+y \ppl v)$,
\item[(CS1)] $(u \ppr x)\cdot y = u\ppr( x \cdot  y )+ u\ppr( y\cdot  x )$,
\item[(CS3)]
$\omega(u, v)\cdot x+ (u \cdot v )\ppr x
= u\ppr( v\ppr x + x \ppl v)$,
\item[(CS4)]$( x\cdot y )\ppl w  =x\cdot(y \ppl w+w\ppr y)$,
\item[(CS5)]
$( x \ppl v)\ppl w
=x\cdot\big(\omega(v, w)+\omega(w,v)\big)+x\ppl(v \cdot w +w \cdot v )$,
\item[(CS6)]
$( u\ppr x)\ppl w= u\ppr( x \ppl w+w\ppr x )$,
\item[(CS7)]$\omega(u, v)\ppl w+\omega( u \cdot v ,w ) = u\ppr\big(\omega(v, w)+\omega(w,v)\big)+ \omega(u,v \cdot w+ w \cdot v )$.
\end{enumerate}
\end{theorem}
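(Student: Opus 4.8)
The plan is to obtain the conditions (CS1)--(CS7) as a direct specialization of \thref{1}. The crossed product ${Z}\,\#_\omega{W}$ is precisely the unified product ${Z}\natural V$ in which $V={W}$ carries its own Zinbiel algebra structure, the map $\ast:{W}\times{W}\to{W}$ is the multiplication of ${W}$ (so $u\ast v=u\cdot v$), and the two actions $\trl,\trr$ of ${Z}$ on $V$ are both zero. So the first step is to verify that this choice of data $\Omega({Z},{W})=(\,0,0,\ppl,\ppr,\omega,\cdot\,)$ is an extending datum in the sense of \deref{exdatum} and that the bilinear map \equref{brackunif} reduces to the displayed crossed product multiplication; this is immediate once the zero actions are substituted.

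Next I would go through the twelve axioms (Z1)--(Z12) of \thref{1} one at a time, setting $\trl=0$, $\trr=0$, $\ast=\cdot|_{{W}}$, and simplifying. The outcome I expect is: (Z1) collapses to the single statement that $({W},\cdot)$ is a Zinbiel algebra, which is assumed and so imposes no condition on the crossed system; (Z2) becomes (CS1); (Z3) becomes (CS2) (the paper's duplicated ``(CS1)'' label is a typo for ``(CS2)''); (Z4) becomes (CS3); (Z5) becomes $0=0$ and drops out; (Z6) becomes (CS4); (Z7) becomes (CS5); (Z8) becomes $0=0$; (Z9) becomes (CS6); (Z10) becomes $0=0$; (Z11) becomes (CS7); and (Z12) becomes the Zinbiel identity for ${W}$ again, hence is automatic. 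So the seven surviving nontrivial conditions are exactly (CS1)--(CS7), and conversely if these hold then all of (Z1)--(Z12) hold, so ${Z}\natural{W}$ is a unified product, i.e. ${Z}\,\#_\omega{W}$ is a Zinbiel algebra.

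The routine but necessary bookkeeping is checking that in each axiom the terms involving $\trl$ or $\trr$ are exactly the ones that vanish, and that the terms involving $\ast$ really do reassemble into the ${W}$-multiplication appearing in (CS3), (CS5), (CS7); for instance in (Z7) the term $\omega(x\trr v,w)$ dies and $x\ppl(v\ast w+w\ast v)$ becomes $x\ppl(v\cdot w+w\cdot v)$, giving (CS5) verbatim. I do not anticipate a genuine obstacle here: the only mild subtlety is making sure that (Z1) and (Z12), which survive as the associativity-type identities for $({W},\cdot)$, are genuinely consequences of ${W}$ being a Zinbiel algebra rather than new constraints — which they are, since with zero actions (Z1) reads $(x\cdot y)\trr w=x\trr(y\trr w+w\trl y)$ collapsing to $0=0$ unless one reinterprets the last line $(u\trl y)\trl z=u\trl(y\cdot z+z\cdot y)$, also $0=0$; and (Z12) with $u\ast v=u\cdot v$, $\omega$ landing in ${Z}$ and $\trl=\trr=0$ becomes $(u\cdot v)\cdot w=u\cdot(v\cdot w+w\cdot v)$, the Zinbiel identity in ${W}$. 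Hence no extra hypotheses beyond those stated are needed, and the theorem follows.
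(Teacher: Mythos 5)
Your specialization is correct and is exactly the argument the paper intends (the paper omits the proof, presenting the crossed product as a special case of the unified product of \thref{1} with $\trl=\trr=0$ and $\ast$ equal to the multiplication of ${W}$): the surviving axioms (Z2), (Z3), (Z4), (Z6), (Z7), (Z9), (Z11) give (CS1)--(CS7), while (Z1), (Z5), (Z8), (Z10) reduce to $0=0$ and (Z12) to the Zinbiel identity of ${W}$. The only blemish is the passing claim in your first list that ``(Z1) collapses to the statement that $({W},\cdot)$ is a Zinbiel algebra'' --- it collapses to $0=0$, as you yourself correct in the final paragraph; it is (Z12) that yields the Zinbiel identity of ${W}$.
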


Now we explain how to classify the extension problem.
Let ${Z}$ and ${W}$ be two Zinbiel algebras and we denote by ${\mathcal C}{\mathcal S}
\, ({W}, \, {Z} )$ the set of all triples
$(\ppr, \, \ppl, \, \omega)$ such that
${Z}\,\#_\omega{W}$ is a crossed product of Zinbiel algebras.
If $({Z}, \, {W}, \,
\ppr, \, \ppl, \, \omega)$ is a crossed system,
then the crossed product ${Z} \#_\omega \, {W}$ is an extension of
${W}$ by ${Z}$ via
\begin{eqnarray} \eqlabel{extencros001}
\xymatrix{ 0 \ar[r] & {Z} \ar[r]^{i} &
{Z}\,\#_\omega{W}
\ar[r]^{\pi} & {W} \ar[r] & 0 }
\end{eqnarray}
where $i (x) = (x, 0)$ and $\pi (x,u) = u$ are the canonical maps. Conversely, any extension $ {E}$ of ${W}$ by
${Z}$ is equivalent to a crossed product extension of the
form \equref{extencros001}. Thus, the classification of all
extensions of ${W}$ by ${Z}$ reduces to the
classification of all crossed products ${Z}\,\#_\omega{W}$ associated
to all crossed systems of Zinbiel algebras $({Z},
{W}, \ppr, \ppl, \omega)$.

\begin{definition}
Two crossed systems $(\ppr, \ppl, \omega)$ and $(\ppr', \ppl', \omega')$ of ${\mathcal C}{\mathcal S} \, ({W}, \,
{Z} )$ are \emph{cohomologous} and we denote this by
$(\ppr, \ppl, \omega) \approx (\ppr', \ppl', \omega')$ if there exists a linear map $r:
{W} \to {Z}$ such that:
\begin{eqnarray*}
u \ppr  x &=& u \ppr ' x  + r(u)\cdot x,\\
x \ppl u &=& x \ppl ' u + x\cdot r(u),\\
\omega(u, v) &=& \omega '(u, v) + r(u)\cdot r(v)- r (u\cdot v) + \, r (u) \ppl ' v + u \ppr ' r (v),
\end{eqnarray*}
for all $x \in {Z}$, $u$, $v \in {W}$.
\end{definition}

Note that $(\ppr,\ppl, \omega) \approx (\ppr', \ppl', \omega')$ if and only if there exists  an isomorphism of Zinbiel algebras that stabilizes
${Z}$ and co-stabilizes ${W}$. Thus we obtain the theoretical answer to the
extension problem.

\begin{corollary}\colabel{extprobra}
Let ${Z}$ and ${W}$ be two  Zinbiel
algebras. Then  $\approx$ is an equivalence relation on the set
${\mathcal C}{\mathcal S} \, ({W}, \, {Z} )$ of
all crossed systems and the map
$$
{\mathcal N} {\mathcal H}^2 ({W}, \, {Z}) :=
{\mathcal C}{\mathcal S} \, ({W}, \, {Z} )/
\approx \,\, \longrightarrow {\mathcal E} {\mathcal P}
({W}, \, {Z}), \qquad
\overline{(\ppr, \ppl, \omega)} \mapsto
{Z}\,\#_\omega{W}
$$
is a bijection between sets, where $\overline{(\ppr, \ppl, \omega)}$ is the equivalence class of
$(\ppr, \ppl, \omega)$ via $\approx$.
\end{corollary}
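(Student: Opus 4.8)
The plan is to obtain \coref{extprobra} as the specialization of the theory of \seref{unifiedprod} to the case in which $Z$ is not merely a subalgebra of $E$ but a two-sided ideal, and in which the complement carries the prescribed Zinbiel multiplication of $W$. The basic observation is that a crossed system $(Z,W,\ppr,\ppl,\omega)$ is exactly an extending datum $\Omega(Z,W)=(\trl,\trr,\ppl,\ppr,\omega,\ast)$ with $\trl=0$, $\trr=0$ and $u\ast v:=u\cdot v$ the given product of $W$: under these substitutions the bilinear map \equref{brackunif} becomes the crossed product multiplication. The first step would be to feed $\trl=\trr=0$ and $\ast=\cdot$ into the twelve compatibility conditions of \thref{1}; then (Z1), (Z5), (Z8) and (Z10) become trivial (both sides vanish), (Z12) reduces to the Zinbiel identity for $(W,\cdot)$, which is part of the data, and (Z2), (Z3), (Z4), (Z6), (Z7), (Z9), (Z11) turn into precisely (CS1)--(CS7). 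Hence $(\ppr,\ppl,\omega)\in{\mathcal C}{\mathcal S}(W,Z)$ if and only if $(0,0,\ppl,\ppr,\omega,\cdot)\in{\mathcal Z}(Z,W)$, and under this identification $Z\,\#_\omega W=Z\natural W$ as Zinbiel algebras.

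Next I would check that every extension is a crossed product. Given a Zinbiel algebra $E$ in an exact sequence $0\to Z\to E\xrightarrow{\pi}W\to 0$, apply Theorem~\ref{classif} with $V:=\Ker(p)$ for a linear projection $p:E\to Z$, producing an isomorphism $E\cong Z\natural V$ that stabilizes $Z$ and co-stabilizes $V$. Because $Z=\Ker(\pi)$ is an ideal of $E$, one has $u\circ x,\ x\circ u\in Z$ for all $x\in Z$, $u\in V$; substituting into the formulas defining the extending datum in Theorem~\ref{classif} forces $\trl=0$ and $\trr=0$, while multiplicativity of $\pi$ forces $u\ast v=u\cdot v$ after identifying $V$ with $W$ via the linear isomorphism $\pi|_V$. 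Thus the extending datum is a crossed system, the isomorphism $E\cong Z\,\#_\omega W$ intertwines $i$ and $\pi$, and therefore is an equivalence of extensions; this gives surjectivity of the map $\overline{(\ppr,\ppl,\omega)}\mapsto Z\,\#_\omega W$ onto ${\mathcal E}{\mathcal P}(W,Z)$.

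Finally I would match the equivalence relations. By Lemma~\ref{lem:11} with $s=id_V$, an isomorphism $Z\,\#_\omega W\to Z\,\#_{\omega'}W$ stabilizing $Z$ and co-stabilizing $W$ amounts to a linear map $r:W\to Z$ satisfying (M1)--(M6); with $\trl=\trr=0$ and $\ast=\ast'=\cdot$, conditions (M1), (M2), (M5) hold automatically and (M3), (M4), (M6) become exactly the three identities defining $(\ppr,\ppl,\omega)\approx(\ppr',\ppl',\omega')$ (this is \deref{echiaab} specialized to crossed-system data, where $\trl=\trl'=0$ and $\trr=\trr'=0$ hold for free). Hence $\approx$, being the restriction to ${\mathcal C}{\mathcal S}(W,Z)$ of the equivalence relation $\approx$ of \deref{echiaab}, is an equivalence relation; the surjection above is constant on $\approx$-classes and injective on the quotient, so ${\mathcal N}{\mathcal H}^2(W,Z)\to{\mathcal E}{\mathcal P}(W,Z)$ is a bijection, exactly as \thref{main1}(2) is proved in general.

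The only laborious point is the bookkeeping in the first step: one genuinely has to run through all twelve conditions of \thref{1} under $\trl=\trr=0$, $\ast=\cdot$ and confirm that the surviving ones are (CS1)--(CS7) and that (Z12) is the Zinbiel identity for $W$. Once that is done, Steps~2 and~3 are formal — they are nothing but the proofs of Theorem~\ref{classif} and Lemma~\ref{lem:11} carried out in the presence of the extra hypothesis that $Z$ is an ideal of $E$ rather than merely a subalgebra.
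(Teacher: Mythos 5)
Your proposal is correct and follows essentially the same route as the paper: the corollary is obtained by specializing the unified-product machinery (Theorem~\ref{th:1}, Theorem~\ref{classif} and Lemma~\ref{lem:11} with $s=id_V$) to extending data with $\trl=\trr=0$ and $\ast$ equal to the given multiplication of ${W}$, which is forced because ${Z}=\Ker(\pi)$ is an ideal and $\pi$ is multiplicative. Your explicit verification that (Z1), (Z5), (Z8), (Z10) trivialize, (Z12) becomes the Zinbiel identity of ${W}$, and the remaining conditions become (CS1)--(CS7) (and likewise (M3), (M4), (M6) become the cohomologous relations) is exactly the bookkeeping the paper leaves implicit.
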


\subsection{Bicrossed products and the factorization problem for Zinbiel algebras}

\begin{definition}\label{mpLei}\cite{HD}
Let $({Z}, \, \cdot)$ and $({W}, \, \cdot)$ be two  Zinbiel
algebras. Then (${Z}$, ${W}$) is called a matched pair if there exists
four bilinear maps:
\begin{eqnarray*}
\trl : {W} \times {Z} \to
{W}, \qquad \trr : {W} \times {Z}
\to {Z},\\\, \qquad \ppl : {Z} \times
{W} \to {Z}, \qquad \ppr : {Z}
\times {W} \to {W},
\end{eqnarray*}
such that the direct sum space ${Z}\oplus {W}$ forms a Zinbiel algebra
under the following the bilinear map:
\begin{equation}\label{eq:matchedpair}
(x, u) \circ (y, v) := \big( x\cdot y + x \ppl v + u\ppr y,\,  x \trr v + u\trl y + u \cdot v \big)
\end{equation}
 where $x$, $y \in {Z}$ and $u$, $v \in {W}$. This Zinbiel algebra is
 called the \emph{bicrossed product}  of ${Z}$ and
${W}$.  We will denoted it by  ${Z} \, \bowtie {W}$.
\end{definition}

\begin{theorem}\label{mpLei}\label{thm:matchedpair}
Let ${Z}$ and ${W}$ be two Zinbiel algebras. Then ${Z} \, \bowtie {W}$  forms a Zinbiel algebra if
and only if the following compatibility conditions hold for all $x$, $y \in {Z}$, $u$, $v \in {W}$:
\begin{enumerate}
\item[(MP1)] $({W}, \trl, \trr)$ and $({Z}, \ppl, \ppr)$ are bimodules,

\item[(MP2)] $(u \ppr x)\cdot y +( u\trl x )\ppr y = u\ppr( x \cdot  y+ y\cdot  x )$,

\item[(MP3)]$( x\cdot y )\ppl w =(x\ppl v)\cdot y+(x \trr v)\ppr y $\\
                     $=x\cdot(y \ppl w+w\ppr y)+x\ppl(y \trr w+w\trl y)$

\item[(MP4)] $(x \ppl v) \trr w+(x\trr v)\cdot w = x\trr( v \cdot w+w\cdot v )$,

\item[(MP5)] $(u \cdot v)\trl x =(u\ppr x) \trr w+ (u\trl x)\cdot w$\\
$= u\trl( v\ppr x+x\ppl v)+u\cdot( v\trl x + x \trr v )$.
\end{enumerate}
\end{theorem}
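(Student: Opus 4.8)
The plan is to verify the Zinbiel identity for the bilinear map \eqref{eq:matchedpair} on $Z \oplus W$ directly, exactly as in the proof of \thref{1}, and to read off which compatibility conditions are forced. Indeed, the bicrossed product is precisely the special case of the unified product ${Z} \natural {W}$ in which the map $\omega$ is zero, the map $\ppr : {W}\times{Z}\to{Z}$ and $\ppl:{Z}\times{W}\to{Z}$ remain as given, $\trl,\trr$ as given, and the map $\ast : {W}\times{W}\to{W}$ is taken to be the Zinbiel multiplication $\cdot$ on ${W}$. So rather than redo the computation from scratch, the cleanest route is to invoke \thref{1}: ${Z}\bowtie{W}$ is a Zinbiel algebra if and only if conditions (Z1)--(Z12) hold for this particular extending datum. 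The bulk of the work is then bookkeeping: substitute $\omega=0$ and $\ast=\cdot_{W}$ into (Z1)--(Z12) and simplify.

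First I would observe that (Z1) becomes exactly the statement that $({W},\trl,\trr)$ is a ${Z}$-bimodule in the sense of the second Definition in \seref{unifiedprod}; this is half of (MP1). Next, (Z4), (Z7), (Z11) all carry the term $\omega(u,v)\cdot x$, $(x\ppl v)\ppl w + \omega(x\trr v,w)$, etc., and with $\omega=0$ one must check these collapse to the remaining listed identities: (Z7) with $\omega=0$ gives $(x\ppl v)\ppl w = x\ppl(v\cdot w + w\cdot v)$, which combined with (Z6), namely $(x\cdot y)\ppl w = x\cdot(y\ppl w + w\ppr y) + x\ppl(y\trr w + w\trl y)$, and (Z2), namely $(x\ppl v)\cdot y + (x\trr v)\ppr y = x\cdot(v\ppr y + y\ppl v) + x\ppl(v\trl y + y\trr v)$, yields (MP3) once one also uses that $({Z},\ppl,\ppr)$ is a bimodule (the other half of (MP1) — this is the content of (Z6) together with the ``mixed'' associativity (Z2), after substituting the bimodule axioms). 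Then (Z3) is literally (MP2). Conditions (Z8) and (Z10) become, after $\ast=\cdot_{W}$, precisely the two equalities in (MP4) and the first equality in (MP5) respectively; (Z5) gives the second equality of (MP5); and (Z12) with $\omega=0$ reduces to the Zinbiel identity $(u\ast v)\ast w = u\ast(v\ast w + w\ast v)$ on ${W}$, i.e.\ it is automatic from ${W}$ being a Zinbiel algebra. Similarly (Z11) with $\omega=0$ is automatic, and (Z9) with $\omega=0$ reduces to part of the bimodule axioms for $({Z},\ppl,\ppr)$.

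The main obstacle — really the only nontrivial point — is the repackaging step: showing that the twelve raw conditions (Z1)--(Z12), after setting $\omega=0$ and $\ast=\cdot_{W}$, are equivalent to the compactly-stated five conditions (MP1)--(MP5). This requires being careful that the three displayed equations inside (Z1) plus the two ``associativity-like'' equations (Z2) and (Z6) and (Z9) together amount to exactly ``$({W},\trl,\trr)$ and $({Z},\ppl,\ppr)$ are bimodules'' plus the cross-terms in (MP3); one has to check no condition is dropped and none is doubly counted. I would lay this out as a short table matching each $(Zi)$ to the condition(s) $(MPj)$ it contributes to, note the three conditions that become vacuous, and conclude. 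Alternatively, for a fully self-contained argument, one can simply expand both sides of $\big((x+u)\circ(y+v)\big)\circ(z+w)$ and $(x+u)\circ\big((y+v)\circ(z+w)+(z+w)\circ(y+v)\big)$ using \eqref{eq:matchedpair}, separate into the ${Z}$-component and the ${W}$-component, and match term by term — the ${W}$-component match produces (MP1) (the $\trl,\trr$ part), (MP4), (MP5), and the Zinbiel identity on ${W}$; the ${Z}$-component match produces (MP1) (the $\ppl,\ppr$ part), (MP2), (MP3), and the Zinbiel identity on ${Z}$. Either way the computation is routine and parallels the proof of \thref{1} verbatim, so I would present it in the condensed form and omit the bulk of the algebra.
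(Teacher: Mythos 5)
Your approach---specializing conditions (Z1)--(Z12) of \thref{1} to the extending datum with $\omega=0$ and $\ast=\cdot_{W}$---is exactly the intended one; the paper states this theorem without proof, leaving precisely this bookkeeping implicit, so your proposal supplies the argument the authors omit. One slip in your accounting: the $(\ppl,\ppr)$-half of (MP1) is \emph{not} ``the content of (Z6) together with (Z2)''; it comes from (Z4), (Z7) and (Z9), which with $\omega=0$ become the three axioms making $Z$ a $W$-bimodule via $\ppr$ and $\ppl$, while (Z2) and (Z6) are the two cross-conditions that the paper packages (with a variable-naming typo, mixing $v$ and $w$) into (MP3), just as (Z5) and (Z10) are packaged into (MP5). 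With that correction your proposed table closes up cleanly: (Z1)\,$\to$\,(MP1), (Z4),(Z7),(Z9)\,$\to$\,(MP1), (Z3)\,$\to$\,(MP2), (Z2),(Z6)\,$\to$\,(MP3), (Z8)\,$\to$\,(MP4), (Z5),(Z10)\,$\to$\,(MP5), and (Z11),(Z12) are vacuous or follow from the Zinbiel identity on $W$.
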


The bicrossed product of two Zinbiel algebras is used to solve the {factorization problem}:
Let ${Z}$ and ${W}$ be two given Zinbiel algebras. Describe and classify all Zinbiel algebras ${E} $ that
factorize through ${Z}$ and ${W}$, i.e. ${E} $ contains ${Z}$ and ${W}$ as Zinbiel subalgebras such that ${E}  = {Z} \oplus {W}$ as vector spaces.
By Theorem \ref{classif} we can prove the following result.

\begin{corollary}\colabel{bicrfactor}
A Zinbiel algebra ${E} $ factorizes through ${Z}$ and
${W}$ if and only if there exists a matched pair of
Zinbiel algebras $({Z}, {W}, \, \trl,
\, \trr, \, \ppl, \, \ppr)$ such
that $ {E}  \cong {Z} \bowtie {W}$.
\end{corollary}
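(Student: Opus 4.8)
The plan is to prove both implications directly, using \thref{classif} for the nontrivial direction. For the ``if'' direction, suppose $({Z}, {W}, \trl, \trr, \ppl, \ppr)$ is a matched pair of Zinbiel algebras. Then, by \thref{thm:matchedpair}, the bicrossed product ${Z} \bowtie {W}$ is a Zinbiel algebra with multiplication \equref{eq:matchedpair}, and the maps $i_{Z}: {Z} \to {Z} \bowtie {W}$, $x \mapsto (x,0)$ and $i_{W}: {W} \to {Z} \bowtie {W}$, $u \mapsto (0,u)$ are injective Zinbiel algebra homomorphisms: indeed $(x,0)\circ(y,0) = (x\cdot y, 0)$ since $x \trr 0 = 0 \trl y = 0$ by bilinearity, and similarly $(0,u)\circ(0,v) = (0, u\cdot v)$ because $0 \ppl v = u \ppr 0 = 0$. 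Thus ${Z}$ and ${W}$ embed as Zinbiel subalgebras and clearly ${Z} \bowtie {W} = {Z} \oplus {W}$ as vector spaces, so ${Z} \bowtie {W}$ factorizes through ${Z}$ and ${W}$. Transporting this factorization along the isomorphism ${E} \cong {Z} \bowtie {W}$ shows ${E}$ factorizes through (the images of) ${Z}$ and ${W}$.

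For the ``only if'' direction, assume ${E}$ factorizes through ${Z}$ and ${W}$, so ${E} = {Z} \oplus {W}$ as vector spaces with both summands Zinbiel subalgebras. The key observation is that the bicrossed product is precisely the special case of the unified product \equref{brackunif} in which $V = {W}$ already carries a Zinbiel algebra structure, the map $\ast$ is the given multiplication on ${W}$, and $\omega = 0$. So I would apply \thref{classif} with $V := {W}$ regarded as a complement of ${Z}$ in ${E}$: this produces an extending datum $\Omega({Z},{W}) = (\trl, \trr, \ppl, \ppr, \omega, \ast)$ and an isomorphism of Zinbiel algebras ${E} \cong {Z} \natural {W}$ that stabilizes ${Z}$ and co-stabilizes ${W}$. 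It remains to identify $\ast$ with the multiplication of ${W}$ and to show $\omega = 0$. Since the isomorphism co-stabilizes ${W}$, the projection $p: {E} \to {Z}$ used in the proof of \thref{classif} restricts to $0$ on ${W}$ (identifying ${W}$ with its image); hence for $u,v \in {W}$ the formulas defining the datum give $\omega(u,v) = p(u\circ v) = 0$ and $u \ast v = u\circ v - p(u\circ v) = u\circ v \in {W}$, which is exactly the product of ${W}$ as a subalgebra of ${E}$. Feeding $\omega = 0$ and $\ast = \cdot_{W}$ back into the compatibility conditions (Z1)--(Z12) of \thref{1}, the surviving ones are precisely (MP1)--(MP5) of \thref{thm:matchedpair}, so $({Z},{W},\trl,\trr,\ppl,\ppr)$ is a matched pair and ${Z}\natural{W} = {Z} \bowtie {W}$.

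The main obstacle, and the step deserving the most care, is verifying that the extending datum produced by \thref{classif} really has $\ast$ equal to the \emph{given} Zinbiel multiplication on ${W}$ rather than some a priori different bilinear map on ${W}$; this rests on the fact that the isomorphism in \thref{classif} co-stabilizes $V$, which forces $p|_{W} = 0$ and makes $u \ast v = u \circ v$ land in ${W}$ and agree with the subalgebra product. Once that identification is in place, the reduction of (Z1)--(Z12) to (MP1)--(MP5) under $\omega = 0$ is a routine term-by-term comparison, and the two directions together give the stated equivalence.
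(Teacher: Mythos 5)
Your proposal is correct and takes essentially the same approach as the paper: the easy direction via Theorem~\ref{thm:matchedpair}, and the converse by applying Theorem~\ref{classif} to $V={W}=\Ker(p)$ and observing that $\omega(u,v)=p(u\circ v)$ vanishes because ${W}$ is a Zinbiel subalgebra (so $u\circ v\in\Ker(p)$), whence $\ast$ coincides with the given multiplication on ${W}$ and (Z1)--(Z12) collapse to the matched pair axioms. The only small quibble is that the vanishing of $\omega$ follows from ${W}$ being closed under the product rather than from the co-stabilization of the isomorphism, but you invoke the correct fact when you identify $u\ast v=u\circ v$ with the subalgebra product.
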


\begin{proof}
From the above Theorem \ref{thm:matchedpair}, it is easy to see that  any bicrossed product ${Z}
\bowtie {W}$ factorizes through ${Z}$ and ${W}$. Conversely, assume that ${E} $ factorizes through
${Z}$ and ${W}$. Let $p: {E}  \to {Z}$
be the linear projection of ${E} $ on ${Z}$, i.e. $p(x + u) := x$, for all $x\in {Z}$ and $u \in{W}$.
Now, we apply Theorem \ref{classif} for $V: = {\rm Ker}(p) = {W}$.
Since $V$ is a Zinbiel subalgebra of $E:= {E} $, the map $\omega = \omega_p$ constructed in the proof of
Theorem \ref{classif} is the trivial map as $[u, v] \in V = {\rm Ker}(p)$.
Thus, the matched pair $({Z},{W}, \, \trl, \,\trr, \, \ppl, \, \ppr)$ is given by:
\begin{eqnarray}
x \trr u := p \bigl(\left[x, \, u \right]\bigl), \qquad
x \trl u : = \left[x, \, u \right] - p \bigl(\left[x, \,
u \right]\bigl) \eqlabel{mpcanonic1} \\ u \ppl x :=
p\bigl([u, \, x]\bigl), \qquad u \ppr x := [u, \, x] -
p\bigl([u, \, x]\bigl)  \eqlabel{mpcanonic2}
\end{eqnarray}
for all $x \in {Z}$ and $u \in {W}$.
\end{proof}


\subsection{Classifying complements for Zinbiel algebras} \selabel{complements}

\begin{definition} \delabel{deformaplie}
Let $({Z}, \, {W}, \, \trr, \,
\trl, \, \ppl, \, \ppr)$ be a
matched pair of Zinbiel algebras. A linear map $r: {W}
\to {Z}$ is called a \emph{deformation map} of the
matched pair $({Z}, {W}, \trr,
\trl, \, \ppl, \, \ppr)$ if the
following compatibility holds for all $u$, $v \in {W}$:
\begin{equation}\eqlabel{factLie}
r\bigl(u\cdot v\bigl) \, - \, r(u)\cdot r(v) = u
\ppr r(v) + r(u) \ppl v - r \bigl(u
\trl r(v) + r(u) \trr v \, \bigl)
\end{equation}
\end{definition}

We will denote by ${\mathcal D}{\mathcal M} \, ({W},{Z})$ the set of all deformation
maps of the matched pair $({Z}, {W},
\trr, \trl, \ppl, \ppr)$.

\begin{theorem}\thlabel{deforLie}
Let ${Z}$ be a Zinbiel subalgebra of ${E} $,
${W}$ a given ${Z}$-complement of ${E} $ and $r:
{W} \to {Z}$ a deformation map of the associated
canonical matched pair $({Z}, {W},
\trr, \trl, \ppl, \ppr)$.

$(1)$ Let $f_{r}: {W} \to {E}  = {Z} \bowtie
{W}$ be the linear map defined for all $u \in{W}$ by:
$$f_{r}(u) = (r(u),\, u).$$
Then $\widetilde{{W}} : = {\rm Im}(f_{r})$ is a
${Z}$-complement of ${E} $.

$(2)$ Let ${W}_{r} := {W}$, as a vector space, with the new the bilinear map defined for all $u$, $v \in {W}$ by:
\begin{equation}\eqlabel{rLiedef}
u\cdot_{r}  v:= u\cdot v + u \trl r(v) + r(u)\trr v
\end{equation}
We call ${W}_{r}$ the $r$-deformation of ${W}$.
Furthermore, ${W}_{r} \cong \widetilde{{W}}$, as Zinbiel algebras.
\end{theorem}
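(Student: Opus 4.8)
The plan is to prove the two assertions separately, relying mainly on \thref{deforLie}'s construction and the matched-pair formalism of \thref{thm:matchedpair}. For part $(1)$, I would first observe that $f_r$ is injective (its second component is the identity on ${W}$), so $\widetilde{{W}}={\rm Im}(f_r)$ is a subspace of ${E}={Z}\bowtie{W}$ of the right dimension. To see that $\widetilde{{W}}$ is a ${Z}$-complement I need two things: that $\widetilde{{W}}\cap{Z}=0$ and that ${Z}+\widetilde{{W}}={E}$ as vector spaces. Both are immediate from the shape $(r(u),u)$: an element of $\widetilde{{W}}$ lies in ${Z}=\{(x,0)\}$ only if $u=0$, and any $(x,u)\in{E}$ equals $(x-r(u),0)+(r(u),u)$. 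The substantive point is that $\widetilde{{W}}$ is a \emph{subalgebra} of ${E}$; for this I would compute $(r(u),u)\circ(r(v),v)$ using the bicrossed product formula \eqref{eq:matchedpair}, obtaining
\[
(r(u),u)\circ(r(v),v)=\bigl(r(u)\cdot r(v)+r(u)\ppl v+u\ppr r(v),\ r(u)\trr v+u\trl r(v)+u\cdot v\bigr),
\]
and then use the deformation-map identity \equref{factLie} precisely to rewrite the first component as $r\bigl(u\trl r(v)+r(u)\trr v+u\cdot v\bigr)$, so that the product lands back in $\widetilde{{W}}=\{(r(w),w)\}$. Thus $\widetilde{{W}}$ is closed under $\circ$.

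For part $(2)$, the natural map to consider is $f_r\colon{W}_r\to\widetilde{{W}}$, $u\mapsto(r(u),u)$, which we already know is a linear isomorphism onto $\widetilde{{W}}$. It remains to check it is an algebra homomorphism from $({W}_r,\cdot_r)$ to $(\widetilde{{W}},\circ)$. Unwinding, $f_r(u\cdot_r v)=\bigl(r(u\cdot_r v),\ u\cdot_r v\bigr)=\bigl(r(u\cdot v+u\trl r(v)+r(u)\trr v),\ u\cdot v+u\trl r(v)+r(u)\trr v\bigr)$, and by \equref{factLie} the first component equals $r(u)\cdot r(v)+r(u)\ppl v+u\ppr r(v)$; comparing with the displayed product $(r(u),u)\circ(r(v),v)$ above shows $f_r(u\cdot_r v)=f_r(u)\circ f_r(v)$. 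Hence $f_r$ is an isomorphism of Zinbiel algebras, provided we also check that $({W}_r,\cdot_r)$ is itself a Zinbiel algebra — but this follows for free, since $({W}_r,\cdot_r)$ is carried by $f_r$ bijectively onto the subalgebra $\widetilde{{W}}\subseteq{E}$, and a bilinear structure transported from a Zinbiel algebra along a linear bijection is again Zinbiel.

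I expect the only real obstacle to be the bookkeeping in part $(1)$: one must apply the Zinbiel bimodule axioms from (MP1) together with (MP2)--(MP5) to verify that the deformation identity \equref{factLie} is exactly what is needed, with no leftover terms, when expanding $(r(u),u)\circ(r(v),v)$. Everything else — injectivity of $f_r$, the complement conditions, and the transport-of-structure argument in part $(2)$ — is formal. A clean way to organize the write-up is to prove the key closure computation once and then invoke it for both parts, since the same expansion of $(r(u),u)\circ(r(v),v)$ serves to show $\widetilde{{W}}$ is a subalgebra and to identify the transported product with $\cdot_r$.
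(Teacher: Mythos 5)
Your proposal is correct and follows essentially the same route as the paper: both verify that $\widetilde{{W}}$ is a subalgebra by expanding $(r(u),u)\circ(r(v),v)$ via the bicrossed product formula and applying the deformation identity \equref{factLie} to fold the first component back into the image of $r$, then check the complement conditions, and finally use $f_r$ itself as the isomorphism in part $(2)$. Your explicit remark that the Zinbiel axioms for ${W}_r$ follow by transport of structure along $f_r$ is a point the paper leaves implicit, but otherwise the arguments coincide.
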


\begin{proof}
$(1)$ First we will prove that $\widetilde{{W}}
= \{\bigl(r(u),\, u\bigl) ~|~ u \in {W}\}$ is a Zinbiel
subalgebra of ${Z} \bowtie {W} = {E}  $. In fact,
for all $u$, $v \in {W}$ we have:
\begin{eqnarray*}
&& (r(u), u)\circ (r(v), v)\\
&\stackrel{\equref{brackunif}}{=}& \Bigl(\underline{r(u)\cdot r(v)+ u \ppr r(v) + r(u) \ppl v}, \,
u\cdot v  + u \trl r(v) + r(u)\trr v \Bigl)\\
&\stackrel{\equref{factLie}}{=}& \Bigl(r (u\cdot v + u \ppr r(v)+ r(u)\trr v), \, u\cdot v + u
\trl r(v) + r(u)\trr v\Bigl)
\end{eqnarray*}
Therefore, $(r(u), u)\circ (r(v), v)\in \widetilde{{W}}$. Moreover, it is straightforward to see
that ${Z} \, \cap \,  \widetilde{{W}} = \{0\}$
and $(x,\, u) = \bigl(x - r(u), \, 0\bigl) + \bigl(r(u),\, x
\bigl) \in {Z} + \widetilde{{W}}$ for all $x \in {Z}$, $u \in {W}$. Here, we view ${Z}
\cong {Z} \times \{0\}$ as a subalgebra of ${Z}
\bowtie {W}$. Therefore, $\widetilde{{W}}$ is a
${Z}$-complement of ${E}  = {Z} \bowtie
{W}$.

$(2)$ We denote by $\widetilde{f_{r}} : {W} \to
\widetilde{{W}}$ the linear isomorphism induced by
$f_{r}$. We will prove that $\widetilde{f_{r}}$ is also a Zinbiel
algebra map if we consider on ${W}$ the the bilinear map given by
\equref{rLiedef}. Indeed, for all $u$, $v \in {W}$ we
have:
\begin{eqnarray*}
&&\widetilde{f_{r}}\bigl(u\cdot_r v\bigl)\\
&\stackrel{\equref{rLiedef}}{=}&
\widetilde{f_{r}}\bigl(u\cdot v + u \trl r(v) + r(u)\trr v\bigl)\\
&{=}& \Bigl(\underline{r \bigl(u\cdot v + u \trl r(v) +
r(u)\trr v\bigl)},\, u\cdot v +
u\trl r(v)+ r(u)\trr v\Bigl)\\
&\stackrel{\equref{factLie}}{=}& \Bigl(r(u)\ast r(v) + u \ppr r(v) + r((u) \ppl v,\, u\cdot v + u \ppr r(v)+ r(u)\trr v\Bigl)\\
&\stackrel{\equref{brackunif}}{=}&(r(u),\, u)\circ(r(v), \, v)
= \widetilde{f_{r}}(u)\cdot\widetilde{f_{r}}(v)
\end{eqnarray*}
Thus, ${W}_{r}$ is a Zinbiel algebra and the proof
is now finished.
\end{proof}

Conversely,  we have the following \thref{descrierecomlie} which show that
all ${Z}$-complements of ${E} $ are $r$-deformations.
The proof is similar as in \cite[Theorem 5.3]{AM3} so we omit the details.

\begin{theorem} \thlabel{descrierecomlie}
Let ${Z}$ be a Zinbiel subalgebra of ${E} $,
${W}$ a given ${Z}$-complement of ${E} $ with the
associated matched pair of Zinbiel algebras
$({Z}, {W}, \trr, \trl, ,
\ppl, \ppr)$. Then $\overline{{W}}$
is a ${Z}$-complement of ${E} $ if and only if there
exists an isomorphism of Zinbiel algebras $\overline{{W}}
\cong {W}_{r}$, for some deformation map $r: {W}
\to {Z}$ of the matched pair $({Z},{W})$.
\end{theorem}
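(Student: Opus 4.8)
The plan is to establish a bijective correspondence between the set ${\mathcal D}{\mathcal M}\,({W},{Z})$ of deformation maps of the canonical matched pair and the set of ${Z}$-complements of ${E}$, sending $r$ to the deformed subalgebra $\widetilde{{W}}={\rm Im}(f_r)$ of \thref{deforLie}; the theorem then follows since \thref{deforLie}(2) already identifies ${W}_r$ with $\widetilde{{W}}$ as Zinbiel algebras. One direction is already done: \thref{deforLie}(1) shows every deformation map yields a ${Z}$-complement and \thref{deforLie}(2) shows it is isomorphic to ${W}_r$. So the work is the converse: given an arbitrary ${Z}$-complement $\overline{{W}}$ of ${E}={Z}\bowtie{W}$, produce a deformation map $r$ with $\overline{{W}}\cong{W}_r$.

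First I would use the direct sum decomposition ${E}={Z}\oplus{W}$ to write, for each $u\in{W}$, the unique element of $\overline{{W}}$ whose ${W}$-component is $u$; since $\overline{{W}}$ is also a ${Z}$-complement, the projection ${E}\to{W}$ restricts to a linear isomorphism $\overline{{W}}\to{W}$, and composing its inverse with the projection ${E}\to{Z}$ defines a linear map $r:{W}\to{Z}$ so that $\overline{{W}}=\{(r(u),u)\mid u\in{W}\}$. Next I would impose that $\overline{{W}}$ is closed under the bracket $\circ$ of ${Z}\bowtie{W}$: computing $(r(u),u)\circ(r(v),v)$ via \equref{brackunif} and demanding that its ${Z}$-component be exactly $r$ applied to its ${W}$-component yields precisely the deformation map condition \equref{factLie}. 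This shows $r\in{\mathcal D}{\mathcal M}\,({W},{Z})$, and then \thref{deforLie}(2) gives $\overline{{W}}=\widetilde{{W}}\cong{W}_r$, completing the argument.

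The main obstacle is bookkeeping rather than conceptual: one must verify that the matched pair data $(\trl,\trr,\ppl,\ppr)$ appearing in the deformation-map equation \equref{factLie} is genuinely the \emph{canonical} one attached to the fixed complement ${W}$ (as in \coref{bicrfactor}), so that the computation of the bracket on $\overline{{W}}$ inside ${Z}\bowtie{W}$ matches the formula \equref{rLiedef} term by term. One also needs that $\overline{{W}}$ being a \emph{subalgebra} (closure under $\circ$) is not only necessary but, together with $r$ being well-defined, sufficient to recover the full complement structure — i.e. that $\dim\overline{{W}}=\dim{W}$ forces $\overline{{W}}$ to be a complement once it is a subalgebra of the right dimension intersecting ${Z}$ trivially, which is automatic from the graph description. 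Since this is exactly parallel to \cite[Theorem 5.3]{AM3}, I would carry out the term-by-term comparison in the Zinbiel setting and otherwise refer to that proof for the routine verifications.
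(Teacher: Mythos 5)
Your proposal is correct and follows exactly the argument the paper intends: the paper omits this proof, deferring to \cite[Theorem 5.3]{AM3}, and your reconstruction (one direction from \thref{deforLie}(1)--(2); for the converse, realizing $\overline{{W}}$ as the graph $\{(r(u),u)\}$ of a linear map $r$ via the projection onto ${W}$ along ${Z}$, and observing that closure of this graph under the bicrossed product \equref{eq:matchedpair} is literally the deformation condition \equref{factLie}) is precisely that standard argument. No gaps.
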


\begin{definition}\delabel{equivLie}
Let $({Z}, {W}, \trr, \trl,
\ppl, \ppr)$ be a matched pair of Zinbiel
algebras. Two deformation maps $r$, $R: {W} \to
{Z}$ are called \emph{equivalent} and we denote this by
$r \sim R$ if there exists $\sigma: {W} \to {W}$
a $k$-linear automorphism of ${W}$ such that for all $x$,
$y\in {W}$:
\begin{eqnarray*}\eqlabel{equivLiemaps}
&&\sigma \bigl(u\cdot v\bigl) -\sigma(u)\cdot \sigma(v)\\
&=& \sigma(u) \trl R \bigl(\sigma(v)\bigl) + R
\bigl(\sigma(u)\bigl) \ppr \sigma(v) - \sigma\bigl(u
\trl r(v)\bigl) - \sigma \bigl(r(u) \trr v\bigl)
\end{eqnarray*}
\end{definition}

\begin{theorem}\thlabel{clasformelorLie}
Let ${Z}$ be a Zinbiel subalgebra of ${E} $,
${W}$ a ${Z}$-complement of ${E} $ and
$({Z}, {W}, \trr, \trl,
\ppl, \ppr)$ the associated canonical matched
pair. Then $\sim$ is an equivalence relation on the set ${\mathcal D}{\mathcal M} \, ( {W}, {Z})$.
If we define ${\mathcal H}{\mathcal A}^{2} ({W}, {Z}) := \, {\mathcal D}{\mathcal M} \,
({W}, {Z})/ \sim$, then the map
$$
{\mathcal H}{\mathcal A}^{2} ({W}, {Z})\rightarrow {\mathcal F} ({Z}, \, {E} ),
\quad
\overline{r} \mapsto {W}_{r}
$$
is a bijection between ${\mathcal H}{\mathcal A}^{2}
({W}, {Z})$ and the
isomorphism classes of all ${Z}$-complements of ${E} $. In
particular, the factorization index of ${Z}$ in ${E} $ is
computed by the formula:
$$
[{E}  : {Z}] = | {\mathcal H}{\mathcal A}^{2}
({W}, {Z})|
$$
\end{theorem}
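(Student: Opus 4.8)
The plan is to mirror the structure of the proof of \cite[Theorem 5.3]{AM3}, adapting the bookkeeping to the Zinbiel setting and using \thref{descrierecomlie}, \thref{clasformelorLie}'s own \thref{deforLie}, and \deref{equivLie}. First I would check that $\sim$ is an equivalence relation on ${\mathcal D}{\mathcal M}\,({W},{Z})$. Reflexivity follows by taking $\sigma=\mathrm{id}_{W}$, which collapses the displayed identity in \deref{equivLie} to the defining equation \equref{factLie} for $r$. For symmetry and transitivity the cleanest route is not to manipulate the messy identity directly, but to reinterpret it: by \thref{deforLie}, each deformation map $r$ produces a $Z$-complement $\widetilde{W}_{r}=\mathrm{Im}(f_{r})$ together with the isomorphism ${W}_{r}\cong\widetilde{W}_{r}$; I would show that $r\sim R$ holds precisely when there is an isomorphism of Zinbiel algebras ${W}_{r}\cong{W}_{R}$ that is compatible with the two embeddings into ${E} ={Z}\bowtie{W}$ (equivalently, when $\widetilde{W}_{r}=\widetilde{W}_{R}$ as subspaces of ${E} $, up to the chosen linear automorphism $\sigma$ of the underlying space ${W}$). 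Once $\sim$ is recast as ``gives rise to the same $Z$-complement,'' symmetry and transitivity are immediate from the fact that equality of subspaces (resp. existence of an isomorphism) is an equivalence relation, and the automorphism $\sigma$ in \deref{equivLie} is exactly the transport of ${W}_{r}\xrightarrow{f_r}\widetilde{W}\xleftarrow{f_R}{W}_{R}$ back to ${W}$.

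Next I would establish that the map $\overline{r}\mapsto{W}_{r}$ is well defined, i.e. that $r\sim R$ implies ${W}_{r}\cong{W}_{R}$ as Zinbiel algebras. Given the automorphism $\sigma$ from \deref{equivLie}, the claim is that $\sigma:{W}_{R}\to{W}_{r}$ (or its inverse, depending on the variance one fixes) is a Zinbiel algebra homomorphism for the deformed products \equref{rLiedef}; this is a direct rewriting of the identity in \deref{equivLie} using the definition $u\cdot_{r}v=u\cdot v+u\trl r(v)+r(u)\trr v$ on both sides, so it is a routine — if lengthy — computation that I would not spell out in full. This gives a well-defined map ${\mathcal H}{\mathcal A}^{2}({W},{Z})\to{\mathcal F}({Z},{E} )$, where ${\mathcal F}({Z},{E} )$ denotes the set of isomorphism classes of $Z$-complements of ${E} $.

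Surjectivity is then exactly the content of \thref{descrierecomlie}: every $Z$-complement $\overline{W}$ of ${E} $ is isomorphic to ${W}_{r}$ for some deformation map $r$, so $\overline{W}$ lies in the image. For injectivity, suppose ${W}_{r}\cong{W}_{R}$ as Zinbiel algebras; I must produce an automorphism $\sigma$ of ${W}$ witnessing $r\sim R$. Here I would use \thref{deforLie}(1): both $\widetilde{W}_{r}=\mathrm{Im}(f_{r})$ and $\widetilde{W}_{R}=\mathrm{Im}(f_{R})$ are $Z$-complements, and since ${W}_{r}\cong\widetilde{W}_{r}$, ${W}_{R}\cong\widetilde{W}_{R}$, an isomorphism ${W}_{r}\cong{W}_{R}$ transports to an isomorphism $\widetilde{W}_{r}\cong\widetilde{W}_{R}$; composing with the canonical projection ${E} ={Z}\oplus{W}\to{W}$ restricted to each complement (which is a linear iso onto ${W}$, since ${Z}\cap\widetilde{W}=\{0\}$ and ${Z}+\widetilde{W}={E} $) yields the desired $\sigma\in\mathrm{Aut}_{k}({W})$, and unwinding the definitions shows $\sigma$ satisfies the identity of \deref{equivLie}. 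The final formula $[{E} :{Z}]=|{\mathcal H}{\mathcal A}^{2}({W},{Z})|$ is then just the definition of the factorization index as the cardinality of the set of isomorphism classes of $Z$-complements, combined with the bijection just proved.

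The main obstacle I anticipate is the bookkeeping in the injectivity step: one must be careful that the isomorphism ${W}_{r}\cong{W}_{R}$ is converted into an automorphism of the \emph{fixed} vector space ${W}$ in a way that is genuinely compatible with the matched-pair actions $\trl,\trr$ appearing in \deref{equivLie}, rather than merely an abstract linear iso. Keeping track of which products ($\cdot$ on ${W}$ versus $\cdot_{r}$, $\cdot_{R}$) and which deformation map ($r$ versus $R$) appear on each side of every identity is where an error would most likely creep in; everything else follows the template of \cite[Theorem 5.3]{AM3} essentially verbatim.
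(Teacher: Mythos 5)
Your proposal is correct and follows essentially the same route as the paper, whose entire proof is the one-line observation that the result follows from \thref{descrierecomlie} together with the fact that $r \sim R$ in the sense of \deref{equivLie} if and only if ${W}_r \cong {W}_R$ as Zinbiel algebras; your text simply fills in the details of that observation (equivalence relation, well-definedness, surjectivity via \thref{descrierecomlie}, injectivity). The only caveat is that the equivalence $\sim$ captures isomorphism of the deformed algebras ${W}_r$ and ${W}_R$, not equality of the complements $\widetilde{{W}}_r$ and $\widetilde{{W}}_R$ as subspaces of ${E}$, so the parenthetical reformulation in your first paragraph should be dropped; since ${W}_r$ and ${W}_R$ share the underlying vector space ${W}$, any Zinbiel isomorphism between them is already the automorphism $\sigma$ required by \deref{equivLie}, which makes your injectivity step immediate.
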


\begin{proof}
Follows from \thref{descrierecomlie} taking into account the fact
that two deformation maps $r$ and $R$ are equivalent in the sense
of \deref{equivLie} if and only if the corresponding Zinbiel
algebras ${W}_r$ and ${W}_R$ are isomorphic.
Thus we obtain the result.
\end{proof}

\section{Flag extending structures of Zinbiel algebras}\selabel{exemple}

In this section, we will study extending structures of a given Zinbiel algebra ${Z}$ by $V$ in the case when $V$ is a one dimensional vector space.
This will be  called flag extending structures.

\begin{definition} \delabel{tehnicaa}
Let ${Z}$ be a Zinbiel algebra. A \emph{flag datum} of ${Z}$ is a $5$-tuple $(x_{0}, \, k_0,
\mu, \,D,\, T)$, where $x_{0} \in {Z}$, $k_0 \in k$, and $\mu: {Z} \to k$, $D, T:{Z} \to {Z}$ are linear maps satisfying the
following compatibility conditions:
\begin{enumerate}
\item[(F1)] $\mu(x\cdot y) =\mu( y )\mu(x) - \mu(y\cdot x),$
\item[(F2)] $\mu( D( x ) + T( x )) =0,$
\item[(F3)] $D(x\cdot y) = D( x )y + \mu( x )D( y ) - D(y\cdot x),$
\item[(F4)] $T(x\cdot y) = T(x)y,$
\item[(F5)] $T^{2}(x) = 2x\cdot x_{0} + 2k_{0}T(x),$
\item[(F6)] $D^{2}(x) = D( T(x)) + \mu(x)x_{0}- T( D(x)),$
\item[(F7)] $T( D(x)) = x_{0}\cdot x + k_{0}D(x),$
\item[(F8)] $T(x_{0}) = 2D(x_{0}) + k_{0}x_{0} , \quad 0 = 2\mu (x_{0}) + k^{2}_{0}.$
\end{enumerate}
We denote by ${\mathcal F}({Z})$ the set of all flag datums of ${Z}$.
\end{definition}

\begin{proposition}\label{flag-prop1}
Let ${Z}$ be a Zinbiel algebra and $V$ a vector space of
dimension $1$ with a basis $\{u\}$. Then there exists a bijection
between the set ${\mathcal Z}  \, ({Z}, V)$
of all extending structures of ${Z}$ through $V$
and ${\mathcal F} \, ({Z})$.
The Zinbiel extending
structure $\Omega({Z}, V)  = \bigl(\trl, \,\trr, \ppl,\, \ppr, \omega, \ast\bigl)$ corresponding to $(x_0, k_0, \mu, D, T)\in {\mathcal F}, ({Z})$ is given by:
\begin{eqnarray}
&&u \trl x = \mu (x) u,  \quad x\trr u =0, \\
&& u \ppr x =D(x), \quad x \ppl u = T(x),\\
&&\omega(u, u) = x_0, \quad  u \ast u = k_0 \, u,
\end{eqnarray}
The corresponding unified product ${Z}\natural V$ is given by
\begin{equation}
(x, u) \circ (y, u) := \Big( x\cdot y + T(x) + D(y) + x_0, \, \mu(y) u+  k_0 \, u \Big)
\end{equation}
and the Zinbiel algebra $A\natural \{u\}$ is generated by the following relations
\begin{equation}
u\circ u = x_0 + k_0 \, u, \quad x\circ u = T(x), \quad u\circ x =D(x) + \mu (x) \, u.
\end{equation}
\end{proposition}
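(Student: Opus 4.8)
The plan is to set up the bijection by specializing \thref{1} to the case $\dim V = 1$. Fix a basis $\{u\}$ of $V$. An extending datum $\Omega({Z},V) = (\trl,\trr,\ppl,\ppr,\omega,\ast)$ is then entirely encoded by five pieces of data: the maps $x \mapsto u\trl x$ and $x \mapsto x\trr u$ land in $V = ku$, so they are given by scalar-valued linear forms $\mu, \nu: {Z}\to k$ via $u\trl x = \mu(x)u$ and $x\trr u = \nu(x)u$; the maps $x\mapsto u\ppr x$ and $x\mapsto x\ppl u$ are linear endomorphisms $D, T: {Z}\to{Z}$; and finally $\omega(u,u) = x_0 \in {Z}$ and $u\ast u = k_0\, u$ for some $k_0\in k$ (since $\omega$ and $\ast$ are bilinear on the one-dimensional space $V$, they are determined by their value on $(u,u)$). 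So an extending datum is the same as a $6$-tuple $(x_0, k_0, \mu, \nu, D, T)$. First I would observe that condition (Z8), instantiated at $x\in{Z}$, $v = w = u$, reads $(x\ppl u)\trr u + (x\trr u)\ast u = x\trr(u\ast u + u\ast u)$, i.e. $\nu(T(x))\,u + \nu(x)k_0\,u = 2k_0\nu(x)\,u$, and more importantly condition (Z1) — specifically the relation $(x\trr v)\trl z = x\trr(v\trl z + z\trr v)$ with $v = u$ — together with the second relation of (Z1) forces $\nu = 0$. Let me double-check which axiom kills $\nu$: from the middle equation of (Z1), $(x\trr u)\trl z = x\trr(u\trl z + z\trr u)$, giving $\nu(x)\mu(z)\,u = \nu(x)\mu(z)\,u + \nu(x)\nu(z)\,u$, hence $\nu(x)\nu(z) = 0$ for all $x,z$, so $\nu = 0$. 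Thus from now on $x\trr u = 0$, recovering the stated formulas, and an extending datum is really a $5$-tuple $(x_0,k_0,\mu,D,T)$.

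Next I would translate each of the remaining axioms (Z1)--(Z12) into the language of $(x_0, k_0,\mu,D,T)$ by plugging in the basis vector $u$ for every slot of $V$. With $\nu = 0$ many terms vanish. The first relation of (Z1), $(x\cdot y)\trr u = x\trr(y\trr u + u\trl y)$, becomes $0 = x\trr(\mu(y)u) = 0$, vacuous; the third relation $(u\trl y)\trl z = u\trl(y\cdot z + z\cdot y)$ gives $\mu(y)\mu(z) = \mu(y\cdot z) + \mu(z\cdot y)$, which is exactly (F1) after relabelling. I would then go through the list: (Z2) with $v = u$ should yield (F4) (the relation $T(x\cdot y) = T(x)\cdot y$), possibly combined with part of another identity; (Z3) with $x = y$... actually (Z3) involves no $V$-inputs beyond $u\ppr$, and reads $(u\ppr x)\cdot y + (u\trl x)\ppr y = u\ppr(x\cdot y + y\cdot x)$, i.e. $D(x)\cdot y + \mu(x)D(y) = D(x\cdot y + y\cdot x) = D(x\cdot y) + D(y\cdot x)$, which is (F3); (Z4) with $u = v$ gives $\omega(u,u)\cdot x + (u\ast u)\ppr x = u\ppr(u\ppr x + x\ppl u) + \omega(u, u\trl x + x\trr u)$, i.e. $x_0\cdot x + k_0 D(x) = D(D(x) + T(x)) + \mu(x)x_0$; combined with (F2) (which I expect to come out of (Z6) or (Z8), namely $\mu(D(x) + T(x)) = 0$) this should split into (F6) and (F7). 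Continuing: (Z5) with $u = v$ gives $(u\ast u)\trl x = u\trl(u\ppr x + x\ppl u) + u\ast(u\trl x + x\trr u)$, i.e. $k_0\mu(x)\,u = \mu(D(x) + T(x))\,u + \mu(x)k_0\,u$, which is again (F2). Then (Z6) with $w = u$ is $(x\cdot y)\ppl u = x\cdot(y\ppl u + u\ppr y) + x\ppl(y\trr u + u\trl y)$, i.e. $T(x\cdot y) = x\cdot(T(y) + D(y)) + \mu(y)T(x)$ — hmm, this doesn't immediately look like (F4); I would need to be careful here, perhaps (F4) comes from (Z2) instead and (Z6) gives a relation I then simplify using (F4) and others. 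I expect (Z7) with $v = w = u$ to give (F8)'s first equation combined with $T^2$ terms, leading to (F5) and part of (F8); (Z9) with $x$ arbitrary, $u$ plugged in, to reproduce (F7)-type relations or be redundant; (Z10), (Z11), (Z12) with all $V$-slots $= u$ to give the remaining pieces of (F5), (F6), (F8) and otherwise be consequences. The bookkeeping is the substantive work.

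The main obstacle, and where I would spend the most care, is precisely the matching of the twelve axioms (Z1)--(Z12), each evaluated on the basis, against the eight flag conditions (F1)--(F8): the correspondence is not one-to-one (twelve specialized identities collapse onto eight, so several are redundant or combine pairwise), and a few of the (Z)-axioms produce equations that only become recognizable as an (F)-condition after substituting an already-derived (F)-condition. So the proof should proceed in a deliberate order — first extract $\nu = 0$, then (F1), then (F2) (so that $\mu$ annihilates $D + T$, cleaning up later computations), then (F4) and the $T$-relations, then the $D$-relations (F3), (F6), (F7), then the $x_0$-relations (F5), (F8) — rather than axiom by axiom. Once the equivalence "$\Omega({Z},V)$ satisfies (Z1)--(Z12)" $\iff$ "$(x_0,k_0,\mu,D,T)$ satisfies (F1)--(F8)" is established, the bijection ${\mathcal Z}({Z},V) \to {\mathcal F}({Z})$ is immediate, and the stated formula for the unified product ${Z}\natural V$ and the three generating relations follow by direct substitution of $x\trr u = 0$, $u\trl x = \mu(x)u$, $x\ppl u = T(x)$, $u\ppr x = D(x)$, $\omega(u,u) = x_0$, $u\ast u = k_0 u$ into \equref{brackunif}. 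I would close the proof there.
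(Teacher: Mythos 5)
Your overall route is exactly the one the paper intends (the paper in fact omits the proof, declaring it a special case of \thref{1}), and your opening step is both correct and necessary: writing $x\trr u=\nu(x)u$ and feeding $v=u$ into the middle identity of (Z1) gives $\nu(x)\mu(z)=\nu(x)\mu(z)+\nu(x)\nu(z)$, hence $\nu(x)^2=0$ and $\nu=0$ over a field, which is what forces $x\trr u=0$ in the statement. Your translations of (Z1) into (F1) and of (Z3) into (F3) are also correct.

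The gap is that the entire content of the proposition is the equivalence ``(Z1)--(Z12) specialized at $u$'' $\iff$ ``(F1)--(F8)'', and your proposal defers this as ``bookkeeping'' precisely at the point where your own computation shows it is not mere bookkeeping. As you note, (Z2) at $v=u$ gives $T(x)\cdot y = x\cdot\bigl(D(y)+T(y)\bigr)+\mu(y)T(x)$ while (Z6) at $w=u$ gives $T(x\cdot y) = x\cdot\bigl(T(y)+D(y)\bigr)+\mu(y)T(x)$; subtracting yields (F4), but the surviving relation $T(x)\cdot y = x\cdot\bigl(D(y)+T(y)\bigr)+\mu(y)T(x)$ involves the products $x\cdot T(y)$ and $x\cdot D(y)$, which occur nowhere in (F1)--(F8), and there is no visible way to derive it from them. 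The same phenomenon recurs for the $D$-- and $x_0$--conditions: for instance (Z9) at $w=u$ reads $T(D(x))=D(T(x))+D^2(x)$, which does not coincide with (F6) as written, and (Z4) at $u=v$ gives $x_0\cdot x+k_0D(x)=D^2(x)+D(T(x))+\mu(x)x_0$, which must then be reconciled with (F6) and (F7) by a sign-sensitive argument you have not carried out. So the proof is incomplete exactly where the proposition could fail (or where the list (F1)--(F8) needs amending): you must either exhibit each specialized (Z)-identity as a consequence of (F1)--(F8), or record the extra relations as additional flag-datum conditions. Until that reconciliation is done in full, the claimed bijection ${\mathcal Z}({Z},V)\cong{\mathcal F}({Z})$ is not established. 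The final assertions about the product \equref{brackunif} and the generating relations are, as you say, immediate substitutions once the equivalence is in place.
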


The proof of the above Proposition \ref{flag-prop1} is omitted since it is a special case of  \thref{1} in last sections.
Now we shall give the classification of all flag extending structures of  ${Z}$ based on \thref{main1}.

\begin{theorem}
Let ${Z}$ be an algebra of codimension $1$ in the vector space $V$. Then:
$\operatorname{Extd}(V, {Z}) \cong{\mathcal{A}{{\mathcal{H}}}}^{2}(k, {Z}) \cong \mathcal{F}({Z}) / \equiv$, where $\equiv$ is the equivalence relation on the set $\mathcal{F}({Z})$
defined as follows: $\left(\mu,  D, T, x_{0}, k_{0}\right) \equiv$ $\left(\mu^{\prime},  D^{\prime}, T^{\prime}, x_{0}^{\prime}, k_{0}^{\prime}\right)$
if and only if $\mu(x)=\mu^{\prime}(x)$ and there exists a pair
$(r, s) $, where $r: V \to {Z}$, $s: V \to V$, $u \mapsto qu,$ $q \in k, u \in V$ are two
linear maps such that:
$$
\begin{aligned}
D(x) &=q D^{\prime}(x)+r(u) \cdot x - \mu(x) r(u), \\
T(x) &=q T^{\prime}(x)+x r(u), \\
x_{0} &=q^{2} x_{0}^{\prime}+r^{2}(u)-k_{0}r(u) + q T^{\prime}(r(u))+q D^{\prime}(r(u)), \\
k_{0} &=q k_{0}^{\prime}+D^{\prime}(r(u)),
\end{aligned}
$$
for all $x \in {Z}$.
The bijection between $\mathcal{F} {Z}) / \equiv$ and $\operatorname{Extd}(V, {Z})$ is given by:
$$
\overline{\left(\mu,  D, T, x_{0}, k_{0}\right)} \mapsto {Z} \ltimes_{\left(\mu,  D, T, x_{0}, k_{0}\right)}\{u\}
$$
where $\overline{\left(\mu,  D, T, x_{0}, k_{0}\right)}$ is the equivalence class of $\left(\mu,  D, T, x_{0}, k_{0}\right)$ via the relation $\equiv$.
\end{theorem}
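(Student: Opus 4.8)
The plan is to specialize the general machinery of \thref{main1} to the one-dimensional case $V = \{u\}$. By \prref{flag-prop1} we already have a bijection ${\mathcal Z}({Z}, V) \cong {\mathcal F}({Z})$, which identifies an extending structure $\Omega({Z}, V) = (\trl, \trr, \ppl, \ppr, \omega, \ast)$ with a flag datum $(x_0, k_0, \mu, D, T)$ via $u \trl x = \mu(x) u$, $x \trr u = 0$, $u \ppr x = D(x)$, $x \ppl u = T(x)$, $\omega(u,u) = x_0$, $u \ast u = k_0 u$. Combined with part $(1)$ of \thref{main1}, which gives ${\rm Extd}(V, {Z}) \cong {\mathcal Z}({Z}, V)/\equiv \;=: {\mathcal H}{\mathcal E}^2(V, {Z})$ (written ${\mathcal{A}{\mathcal{H}}}^2(k, {Z})$ here since $\dim V = 1$), the only remaining task is to transport the equivalence relation $\equiv$ from \deref{echiaa} through the bijection of \prref{flag-prop1} and verify it becomes exactly the relation $\equiv$ on ${\mathcal F}({Z})$ stated in the theorem.

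First I would fix a pair $(r, s)$ of linear maps with $r : V \to {Z}$ and $s : V \to V$; since $V = ku$ is one-dimensional, $r$ is determined by the single vector $r(u) \in {Z}$ and $s$ is determined by the scalar $q \in k$ with $s(u) = qu$ (here $s$ must be bijective, i.e. $q \neq 0$, for an equivalence). Then I would write out each of the six defining relations of \deref{echiaa} — the formulas for $u \trl x$, $x \trr u$, $u \ppr x$, $x \ppl u$, $u \ast v$, $\omega(u,v)$ — evaluated on the basis vector $u$, substituting the \prref{flag-prop1} dictionary on both the primed and unprimed sides. The relation $u \trl x = s^{-1}(s(u) \trl' x)$ becomes $\mu(x) u = s^{-1}(q \mu'(x) u) = \mu'(x) u$, giving $\mu = \mu'$. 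The relation $x \trr u = s^{-1}(x \trr' s(u))$ becomes $0 = 0$, giving nothing. The relation $u \ppr x = r(u)\cdot x + s(u) \ppr' x - r\circ s^{-1}(s(u) \trl' x)$ becomes $D(x) = r(u)\cdot x + q D'(x) - r\circ s^{-1}(q\mu'(x) u) = r(u)\cdot x + q D'(x) - \mu(x) r(u)$, which is the first displayed formula. Similarly $x \ppl u = x \cdot r(u) + x \ppl' s(u) - r\circ s^{-1}(x \trr' s(u))$ becomes $T(x) = x\cdot r(u) + q T'(x) - 0 = q T'(x) + x r(u)$, the second formula. For $u \ast u$ and $\omega(u,u)$ one does the same substitution, using $u \ast' u = k_0' u$, $\omega'(u,u) = x_0'$, $r(u) \trr' s(u) = 0$ (by the flag dictionary $\trr' = 0$), and $s(u) \trl' r(u) = q\,(s(u) \trl' u)$... — care is needed here since $\trl'$ takes a $V$-argument on the left and a ${Z}$-argument on the right, so $s(u) \trl' r(v)$ with $r(v) \in {Z}$ makes sense and equals $q\mu'(r(u)) u$; tracking the scalar $\mu'(r(u)) = D'(r(u))$-type identifications carefully produces the $k_0$ formula $k_0 = q k_0' + D'(r(u))$ and the $x_0$ formula.

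The main obstacle I anticipate is purely bookkeeping rather than conceptual: the maps $\trl, \trr, \ppl, \ppr$ have asymmetric source/target types ($V \times {Z}$ versus ${Z} \times V$, landing in $V$ versus ${Z}$), so when $r(u)$ or $r(v) \in {Z}$ is fed into a slot one must be scrupulous about which of the primed operations applies and about the extra scalar factors $q$ coming from $s(u) = qu$ and $s^{-1}(u) = q^{-1}u$. In particular the term $r(u) \trr' s(v)$ in the $u \ast v$ and $\omega$ relations vanishes because $\trr' \equiv 0$ in the flag dictionary, which is what collapses the $s(u) \ast' s(v)$ and $s(u) \trl' r(v)$ contributions to the clean scalar $q^2 x_0'$ plus correction terms; getting every coefficient right is the only real work. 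Finally I would invoke part $(1)$ of \thref{main1} to conclude that ${\rm Extd}(V, {Z}) \cong {\mathcal F}({Z})/\equiv$ with the stated bijection $\overline{(\mu, D, T, x_0, k_0)} \mapsto {Z} \ltimes_{(\mu, D, T, x_0, k_0)} \{u\}$, the latter being the unified product ${Z} \natural V$ of \prref{flag-prop1} under the identification, which completes the proof.
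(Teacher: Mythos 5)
Your proposal is correct and follows exactly the paper's (implicit) route: the theorem is stated in the paper with no separate proof, as a direct specialization of \thref{main1}(1) through the dictionary of Proposition \ref{flag-prop1}, which is precisely your plan, and your term-by-term transport of the relations of \deref{echiaa} to the one-dimensional setting checks out. One caution: the computation for $u \ast u$ yields $k_0 = q k_0' + \mu(r(u))$ (a scalar), so you should not attempt the ``$\mu'(r(u)) = D'(r(u))$-type identification'' you allude to --- $D'(r(u))$ is an element of ${Z}$, not of $k$, and its appearance in the stated $k_0$-formula is a typo in the paper; your derivation gives the correct expression.
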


\begin{example}
Let ${Z}$ be 3-dimensional Zinbiel algebras with a basis $\{e_1, e_2, e_3\}$, the bilinear maps are given as follows:\\
\noindent
$(A_{1})$ $$\quad e_1\cdot e_1=e_3,$$
$(A_{2})$ $$\quad e_1\cdot  e_1=e_3,\quad  e_2\cdot  e_2=e_3,$$
$(A_{3})$ $$\quad e_1\cdot  e_2=\half e_3,\quad e_2\cdot  e_1=-\half e_3,$$
$(A_{4})$ $$\quad e_2\cdot  e_1= e_3,$$
$(A_{5})$ $$\quad e_1\cdot  e_1=e_3,\quad e_1\cdot  e_2=e_3,\quad e_2\cdot  e_2=\lambda e_3(\lambda \neq 0),$$
$(A_{6})$ $$\quad e_1\cdot  e_1=e_2,\quad e_1\cdot  e_2=\half e_3,\quad e_2\cdot  e_1=e_3.$$
\end{example}

Now we compute the flag datum for some special cases.
To simplify the calculation, first we  let $x_0=0, k_0=0, T=0$, then the conditions of flag datum is transformed to
\begin{eqnarray*}
&&\mu( x\cdot y ) =\mu( y )\mu(x) - \mu( y\cdot x ),\\
&&\mu( D( x ) ) = 0,\\
&&D( x\cdot y ) = D( x )\cdot y + \mu( x )D( y ) - D( y\cdot x ),\\
&&D^{2}(x) = 0.
\end{eqnarray*}

Denote by
\[
D\left( {\begin{array}{l}
 e_1 \\
 e_2 \\
 e_3 \\
 \end{array}} \right) =
\left( {{\begin{array}{*{20}c}
 {a_{11} }  & a_{12}  & {a_{13} } \\
  a_{21}  & a_{22}  & a_{23} \\
 {a_{31} }  & a_{32}  & {a_{33} } \\
\end{array} }} \right)
\left( {\begin{array}{l}
 e_1 \\
 e_2 \\
 e_3 \\
 \end{array}} \right),
\]
$$\mu(e_{1})=\mu_{1},\,\mu(e_{2})=\mu_{2},\,\mu(e_{3})=\mu_{3}.$$

For $(A_{1})$, we have $\mu_2=0,\mu_1\neq 0, \mu_{3} = \half \mu_{1}^{2}$,
\[
D_1=
\left( {{\begin{array}{*{20}c}
0  &  0  & 0 \\
a_{21}  & 0  & -\frac{2a_{21}}{\mu_{1}} \\
0  & 0  & 0\\
\end{array} }} \right).
\]

For $(A_{2})$,  we have $\mu_2=0,\mu_1\neq 0, \mu_{3} = \half \mu_{1}^{2},$
then $D_2=0$.

For $(A_{3})$,  we have the following two cases: if $\mu_{1}=0, \mu_{2} \neq 0, \mu_{3} \neq 0$,
\[
D_{31} =
\left( {{\begin{array}{*{20}c}
  0   & 0  & 0 \\
 a_{21}   & 0  & 0 \\
a_{31}   & 0  & 0 \\
\end{array} }} \right);
\]
if $\mu_{2} = 0, \mu_{1} \neq 0, \mu_{3} \neq 0$,
\[
D_{32}=
\left( {{\begin{array}{*{20}c}
0  &  a_{12}  & 0 \\
0& 0  & 0 \\
0& a_{32}  & 0\\
\end{array} }} \right).
\]

For $(A_{4})$, we have $\mu_{3} =\mu_{1}\mu_{2}$,
\[
D_{41}=
\left( {{\begin{array}{*{20}c}
0  & 0 & 0 \\
 a_{21}  & 0  & a_{23} \\
0  & 0  & 0\\
\end{array} }} \right), \quad
D_{42}=
\left( {{\begin{array}{*{20}c}
0  &  a_{12}  & 0 \\
0  & 0  & 0 \\
0  & \mu_{2}a_{12}  & 0\\
\end{array} }} \right).
\]

For $(A_{5})$, we have $\mu_1=\mu_2=\mu_3=0$,
\[
D_5=
\left( {{\begin{array}{*{20}c}
0  & 0 & a_{13} \\
0  & 0  & a_{2 3} \\
0  & 0  & 0\\
\end{array} }} \right).
\]

For $(A_{6})$,  we have $\mu_1\neq 0, \mu_{2} = \half \mu_{1}^{2},\mu_{3} = \frac{1}{3}\mu_{1}^{3},$  $D_6= 0.$

Thus we obtain 4-dimensional  Zinbiel algebras with a basis $\{e_1, e_2, e_3, u\}$, the bilinear maps are given as follows:\\
$(DA_{1})$ $$\quad e_1\cdot e_1=e_3, \quad u\cdot x_1=\mu_1u,  \quad u\cdot x_2=a_{21}x_1-\textstyle{\frac{2a_{21}}{\mu_{1}}}x_3, \quad u\cdot x_3= \half  \mu_1^2u;$$
$(DA_{2})$ $$\quad e_1\cdot  e_1=e_3,\quad  e_2\cdot  e_2=e_3, \quad u\cdot x_1= \mu_1u, \quad u\cdot x_3= \half  \mu_1^2 u;$$
$(DA_{3})$
$$(1)  \quad e_1\cdot  e_2=\half e_3,\quad e_2\cdot  e_1=-\half e_3,\quad u\cdot x_2=a_{21}x_1+{\mu_{2}}u, \quad u\cdot x_3= a_{31}x_1+\mu_3u;$$
$$(2) \quad e_1\cdot  e_2=\half e_3,\quad e_2\cdot  e_1=-\half e_3,\quad u\cdot x_1=a_{12}x_2+{\mu_{1}}u, \quad u\cdot x_3= a_{32}x_2+\mu_3u;$$
$(DA_{4})$
$$(1) \quad e_2\cdot  e_1= e_3,\quad u\cdot x_2=a_{21}x_1+a_{23}x_3;\qquad\qquad\qquad\qquad\qquad\quad\qquad\qquad\qquad$$
$$(2) \quad e_2\cdot  e_1= e_3,\quad u\cdot x_1=a_{12}x_2,\quad u\cdot x_3=\mu_2a_{12}x_2;\qquad\qquad\qquad\qquad\qquad\qquad$$
$(DA_{5})$
$$e_1\cdot  e_1=e_3,\quad e_1\cdot  e_2=e_3,\quad e_2\cdot  e_2=\lambda e_3(\lambda \neq 0),\quad u\cdot x_1=a_{13}x_3,\quad u\cdot x_2=a_{23}x_3;$$
$(DA_{6})$ $$\quad e_1\cdot  e_1=e_2,\quad e_1\cdot  e_2=\half e_3,\quad e_2\cdot  e_1=e_3, \quad u\cdot x_2=  \half  \mu_1^2  u, \quad u\cdot x_3= \third  \mu_1^3 u.$$

Similarly, if we  let $x_0=0, k_0=0, D=0$, then the conditions of flag datum is transformed to
\begin{eqnarray*}
&&\mu( x\cdot y ) =\mu( y )\mu(x) - \mu( y\cdot x ),\\
&&\mu( T( x ) ) = 0,\\
&&T( x\cdot y ) = T( x )\cdot y ,\\
&&T^{2}(x) = 0.
\end{eqnarray*}

Denote by
\[
T\left( {\begin{array}{l}
 e_1 \\
 e_2 \\
 e_3 \\
 \end{array}} \right) =
\left( {{\begin{array}{*{20}c}
 {b_{11} }  & b_{12}  & {b_{13} } \\
  b_{21}  & b_{22}  & b_{23} \\
 {b_{31} }  & b_{32}  & {b_{33} } \\
\end{array} }} \right)\quad
\left( {\begin{array}{l}
 e_1 \\
 e_2 \\
 e_3 \\
 \end{array}} \right),
\]
$$\mu(e_{1})=\mu_{1},\, \mu(e_{2})=\mu_{2},\, \mu(e_{3})=\mu_{3}.$$

For $(A_{1})$, we have $\mu_{3} = \half \mu_{1}^{2},$
\[
\mathop T\nolimits_{11}=
\left( {{\begin{array}{*{20}c}
0  &  0  & 0 \\
b_{21}  & 0  & -\frac{2}{\mu_{1}}b_{21} \\
0  & 0  & 0\\
\end{array} }} \right),\quad
\mathop T\nolimits_{12}=
\left( {{\begin{array}{*{20}c}
0  &  b_{12}  & -\frac{2\mu_{2}}{\mu_{1}^{2}}b_{12} \\
0  & 0  & 0 \\
0  & 0  & 0\\
\end{array} }} \right).
\]

For $(A_{2})$, we have $\mu_{3} = \half \mu_{1}^{2} = \half \mu_{2}^{2}, \mu_{2} = \mu_{1},$
\[
\mathop T\nolimits_{12} =
\left( {{\begin{array}{*{20}c}
0   & 0  & 0 \\
  b_{21}   & 0  &  -\frac{2}{\mu_{1}}b_{21} \\
 0   & 0  & 0 \\
\end{array} }} \right),\quad
\mathop T\nolimits_{22} =
\left( {{\begin{array}{*{20}c}
0   & b_{12}  & -\frac{2}{\mu_{1}}b_{12} \\
 0   & 0  &  0 \\
 0   & 0  & 0 \\
\end{array} }} \right).
\]

For $(A_{3})$, we have $\mu_{2}\mu_{1}=0$.
If $\mu_{1}=0, \mu_{2} \neq 0, \mu_{3} \neq 0$, we obtain
\[
\mathop T\nolimits_{31} =
\left( {{\begin{array}{*{20}c}
 0   & 0  & 0 \\
 b_{21}   & 0  &  0 \\
 0   & 0  & 0 \\
\end{array} }} \right),\quad
\mathop T\nolimits_{32} =
\left( {{\begin{array}{*{20}c}
 0   & b_{12}  & -\frac{\mu_{2}}{\mu_{3}}b_{12} \\
 0   & 0  &  0 \\
 0   & 0  & 0 \\
\end{array} }} \right),
\]
If $\mu_{2}=0, \mu_{1} \neq 0, \mu_{3} \neq 0$,
we obtain
\[
\mathop T\nolimits_{33} =
\left( {{\begin{array}{*{20}c}
 0   & 0  & 0  \\
 b_{21}   & 0  &   -\frac{\mu_{1}}{\mu_{3}}b_{21} \\
 0   & 0  & 0 \\
\end{array} }} \right),\quad
\mathop T\nolimits_{34} =
\left( {{\begin{array}{*{20}c}
 0   & b_{12}  & 0  \\
 0   & 0  &  0 \\
 0   & 0  & 0 \\
\end{array} }} \right).
\]

For $(A_{4})$, we have $\mu_{3} = \mu_{2}\mu_{1}$, then
\[
\mathop T\nolimits_{41} =
\left( {{\begin{array}{*{20}c}
0   & 0  & 0 \\
 b_{21}   & 0  &  -\frac{1}{\mu_{2}}b_{21} \\
 0   & 0  & 0 \\
\end{array} }} \right),\quad
\mathop T\nolimits_{42} =
\left( {{\begin{array}{*{20}c}
b_{11}   & b_{12}  & -\frac{b_{11}}{\mu_{2}} - \frac{b_{12}}{\mu_{1}} \\
 0   & 0  &  0 \\
 0   & 0  & 0 \\
\end{array} }} \right).
\]

For $(A_{5})$,  we have $\mu_1=\mu_2=\mu_3=0$,
\[
T_{51}=
\left( {{\begin{array}{*{20}c}
0  & 0 & 0 \\
0  & 0  & b_{2 3} \\
0  & 0  & 0\\
\end{array} }} \right),
\]
or $\mu_{2} = \half \mu_{1}, \mu_{3} = \half \mu_{1}^{2}$, then $T_{52}= 0$.

For $(A_{6})$,  we have $\mu_{2} = \half \mu_{1}^{2}, \mu_{3} = \frac{1}{3}\mu_{1}^{3},$  then $T_6 = 0.$

Thus we obtain 4-dimensional  Zinbiel algebras with a basis $\{e_1, e_2, e_3, u\}$, the bilinear maps are given as follows:\\
$(TA_{1})$
\begin{eqnarray*}
(1)&&e_1\cdot e_1=e_3, \quad u\circ x_1 = \mu_1 u, \quad u\circ x_2 = \mu_2 u,\quad u\circ x_3 =\half \mu_{1}^2 u,\qquad\qquad\qquad\\
&&x_2\circ u = b_{21}x_1-\textstyle{\frac{2}{\mu_{1}}}b_{21}x_3,\\
(2)&&e_1\cdot e_1=e_3, \quad u\circ x_1 = \mu_1 u,\quad u\circ x_2 = \mu_2 u, \quad u\circ x_3 =\half \mu_{1}^2 u,\qquad\qquad\qquad\\
&&x_1\circ u = b_{12}x_2-\textstyle{\frac{2\mu_{2}}{\mu_{1}^{2}}}b_{12} x_3;
\end{eqnarray*}
$(TA_{2})$
\begin{eqnarray*}
(1) && e_1\cdot  e_1=e_3,\quad  e_2\cdot  e_2=e_3, \quad u\circ x_1 = \mu_1 u,\quad u\circ x_2 = \mu_1 u,\quad u\circ x_3 =\half \mu_{1}^2 u,\\
&&x_2\circ u = b_{21}x_1-\textstyle{\frac{2}{\mu_{1}}}b_{21}x_3,\\
(2) && e_1\cdot  e_1=e_3,\quad  e_2\cdot  e_2=e_3, \quad u\circ x_1 = \mu_1 u,\quad u\circ x_2 = \mu_1 u,\quad u\circ x_3 =\half \mu_{1}^2 u,\\
&&x_1\circ u = b_{12}x_2-\textstyle{\frac{2}{\mu_{1}}}b_{12} x_3;
\end{eqnarray*}
$(TA_{3})$
\begin{eqnarray*}
(1)&&e_1\cdot  e_2=\half e_3,\quad e_2\cdot  e_1=-\half e_3, \quad u\circ x_2 = \mu_2 u, \quad u\circ x_3= \mu_3 u,\qquad\qquad\qquad\\
&&x_2\circ u = b_{21}x_1,\\
(2)&&e_1\cdot  e_2=\half e_3,\quad e_2\cdot  e_1=-\half e_3, \quad u\circ x_2 = \mu_2 u, \quad u\circ x_3= \mu_3 u,\qquad\qquad\qquad\\
&&x_1\circ u = b_{12}x_2-\textstyle{\frac{\mu_{2}}{\mu_{3}}}b_{12} x_3,\\
(3)&&e_1\cdot  e_2=\half e_3,\quad e_2\cdot  e_1=-\half e_3, \quad u\circ x_1 = \mu_1 u,\quad u\circ x_3 = \mu_3 u, \qquad\qquad\qquad\\
&&x_2\circ u = b_{21}x_1-\textstyle{\frac{\mu_{1}}{\mu_{3}}}b_{21}x_3,\\
(4)&&e_1\cdot  e_2=\half e_3,\quad e_2\cdot  e_1=-\half e_3, \quad u\circ x_1 = \mu_1 u,\quad u\circ x_3 = \mu_3 u, \qquad\qquad\qquad\\
&&x_1\circ u = b_{12}x_2;
\end{eqnarray*}
$(TA_{4})$
\begin{eqnarray*}
(1) &&e_2\cdot  e_1= e_3, \quad u\circ x_1 = \mu_1 u,\quad u\circ x_2 = \mu_2 u,\quad u\circ x_3 = \mu_{1}\mu_2 u, \qquad\qquad\qquad\\
&&x_2\circ u = b_{21}x_1-\textstyle{\frac{1}{\mu_{1}}}b_{21}x_3,\\
(2) &&e_2\cdot  e_1= e_3,\quad u\circ x_1 = \mu_1 u,\quad u\circ x_2 = \mu_2 u,\quad u\circ x_3 =\mu_{1}\mu_2 u, \qquad\qquad\qquad\\
&&x_1\circ u = b_{11}x_1+b_{12}x_2-\textstyle{\left(\frac{b_{11}}{\mu_{2}}+\frac{b_{12}}{\mu_{1}}\right)} x_3;
\end{eqnarray*}
$(TA_{5})$
\begin{eqnarray*}
(1) && e_1\cdot  e_1=e_3,\quad e_1\cdot  e_2=e_3,\quad e_2\cdot  e_2=\lambda e_3(\lambda \neq 0),\quad x_2\circ u = b_{23}x_3,\qquad\qquad\\
(2) && e_1\cdot  e_1=e_3,\quad e_1\cdot  e_2=e_3,\quad e_2\cdot  e_2=\lambda e_3(\lambda \neq 0),\\
 &&u\circ x_1 = \mu_1 u,\quad u\circ x_2 = \half\mu_1 u,\quad u\circ x_3 = \half\mu_{1}^2 u;
\end{eqnarray*}
$(TA_{6})$
\begin{eqnarray*}
 &&e_1\cdot  e_1=e_2,\quad e_1\cdot  e_2=\half e_3,\quad e_2\cdot  e_1=e_3\\
  &&u\circ x_1 = \mu_1 u,\quad u\circ x_2 = \half\mu_1^2 u,\quad u\circ x_3 = \third\mu_{1}^3 u.
\end{eqnarray*}

We remark that the above constructed 4-dimensional Zinbiel algebras are not new,
since a general  classification of 4-dimensional Zinbiel algebras was given in \cite{AOK}.

\section{Conclusions and problems}
In this paper,  we developed the theory of extending structures for Zinbiel algebras.
On the other side, it was proved in \cite{DT,T22} that each finite-dimensional Zinbiel algebra is nilpotent.
Thus it is natural to study the extending structures for nilpotent Zinbiel algebras and its relationship with  central extensions considered in \cite{CKK,KAM}.
This is left for future investigations.

\end{document}